\def\titlerunning#1{\gdef\titrun{#1}}
\def\author#1{\gdef\autrun{\def\and{\unskip, }#1}\gdef\@author{#1}}
\def\address#1{{\def\and{\\\hspace*{15.6pt}}\renewcommand{\thefootnote}{}\footnote{#1}}\markboth{\autrun}{\titrun}}
\def\email#1{email: \href{mailto:#1}{#1} }
\def\subjclass#1{\par\bigskip\noindent\textbf{Mathematics Subject Classification 2020.} #1}
\def\keywords#1{\par\smallskip\noindent\textbf{Keywords.} #1}
\newenvironment{acknowledgments}{\bigskip\small\noindent\textit{Acknowledgments.}}{\par}
\newtheorem{thm}{Theorem}[section]
\newtheorem{cor}[thm]{Corollary}
\newtheorem{lem}[thm]{Lemma}
\newtheorem{conjecture}[thm]{Conjecture}
\newtheorem{mainthm}[thm]{Main Theorem}
\theoremstyle{definition}
\newtheorem{defin}[thm]{Definition}
\newtheorem{exa}[thm]{Example}
\newtheorem*{rem}{Remark}
\numberwithin{equation}{section}
\newcommand\Acal{\mathcal{A}}
\newcommand\Ccal{\mathcal{C}}
\newcommand\Dcal{\mathcal{D}}
\newcommand\Gcal{\mathcal{G}}
\newcommand\Ocal{\mathcal{O}}
\newcommand\Ucal{\mathcal{U}}
\newcommand\bx{\mathbf{x}}
\newcommand\by{\mathbf{y}}
\newcommand\bz{\mathbf{z}}
\newcommand\bH{\mathbf{H}}
\newcommand\bN{\mathbf{N}}
\newcommand\A{\mathbb{A}}
\newcommand\B{\mathbb{B}}
\newcommand\C{\mathbb{C}}
\newcommand\D{\overline{\mathbb D}}
\newcommand\CP{\mathbb{CP}}
\renewcommand\D{\mathbb D}
\newcommand\N{\mathbb{N}}
\newcommand\R{\mathbb{R}}
\newcommand\Z{\mathbb{Z}}
\newcommand\ggot{\mathfrak{g}}
\newcommand\igot{\mathfrak{i}}
\renewcommand\igot{\mathfrak{i}}
\newcommand\pgot{\mathfrak{p}}
\newcommand\E{\mathrm{e}}
\renewcommand\imath{\igot}
\newcommand\hra{\hookrightarrow}
\newcommand\lra{\longrightarrow}
\newcommand\wt{\widetilde}
\newcommand\wh{\widehat}
\newcommand\di{\partial}
\newcommand\TC{\mathrm{TC}}
\newcommand\CMI{\mathrm{CMI}}
\newcommand\CMInf{\mathrm{CMI}_{\mathrm{nf}}}
\newcommand\NC{\mathrm{NC}}
\newcommand\NCnf{\mathrm{NC}_{\mathrm{nf}}}
\newcommand\Area{\mathrm{Area}}
\begin{document}

% Give an abbreviation of the title for the running page headers.
\titlerunning{Minimal surfaces in Euclidean spaces by way of complex analysis}

% Here you can enter the full article title.
\title{\textbf{Minimal surfaces in Euclidean spaces \\ by way of complex analysis}}

% Here you can enter the full names of authors separated by \and.
\author{Franc Forstneri\v c}

% Please do not enter a date.
\date{}

\maketitle

% Here you can enter the address and email of each author separated by \and by following the example below.
\address{University of Ljubljana, Faculty of Mathematics and Physics, 
Jadranska 19, SI--1000 Ljubljana, Slovenia, and
Institute of Mathematics, Physics and Mechanics, Jadranska 19, SI--1000 Ljubljana, Slovenia; \email{franc.forstneric@fmf.uni-lj.si}}

% Here you can enter an optional dedication.
%\begin{dedication}
%To David Hilbert on the occasion of his 158th birthday
%\end{dedication}

% Here you can enter the abstract, the MSC classes, and some keywords.

\begin{abstract}
This is an expanded version of my plenary lecture at 
the 8th European Congress of Mathematics in Portoro\v z  
on 23 June 2021. The main part of the paper is a survey of recent applications of 
complex-analytic techniques to the theory of conformal minimal surfaces in Euclidean spaces.
New results concern approximation, interpolation, and general position properties
of minimal surfaces, existence of minimal surfaces with a given Gauss map, 
and the Calabi--Yau problem for minimal surfaces. To be accessible to a wide audience,
the article includes a self-contained elementary introduction to the theory
of minimal surfaces in Euclidean spaces.

\subjclass{Primary 53A10; Secondary 32H02} %, 32Q56}
\keywords{minimal surface, conformal harmonic map, Calabi--Yau problem}
\end{abstract}

\tableofcontents

%
%   1. INTRODUCTION
%
\section{Minimal surfaces: a link between mathematics, science, engineering, and art}\label{sec:intro}

Minimal surfaces are among the most beautiful and aesthetically pleasing geometric objects.
These are surfaces in space which locally minimize area, in the sense that any small enough piece
of the surface has the smallest area among surfaces with the same boundary.
From the physical viewpoint, these are surfaces minimizing tension, hence in equilibrium position.
They appear in a variety of applications to engineering, biology, architecture, and others.

The subject has a luminous history, going back to 1744 when Leonhard Euler \cite{Euler1744}
showed that pieces of the surface now called {\em catenoid} (see Example \ref{ex:catenoid})
have smallest area among all surfaces of rotation in the 3-dimensional Euclidean 
space $\R^3$. The catenoid derives it name from {\em catenary}, the curve that an idealized 
hanging chain assumes under its own weight when supported only at its ends.
The model catenary is the graph of the hyperbolic cosine function $y=\cosh x$, and a 
catenoid is obtained by rotating this curve around the $x$-axis in the $(x,y,z)$-space.
Topologically, a catenoid is a cylinder, and as a conformal surface it is the puncture plane 
$\C^*=\C\setminus\{0\}$. 
From mathematical viewpoint, the catenoid is one of the most paradigmatic examples of
minimal surfaces, and it appears in several important classification results and in proofs of major theorems.

The subject of minimal surfaces was put on solid footing by Joseph--Louis Lagrange who 
developed the calculus of variations during 1760-61, thereby reducing the problem 
of finding stationary points of functionals to a second order partial differential equation,
now called Lagrange's equation.
His work was published in 1762 by Accademia delle scienze di Torino 
\cite{Lagrange1762Paris1,Lagrange1762Paris2} and is available in his collected works 
\cite{LagrangeOeuvres1}. In the second paper \cite{Lagrange1762Paris2}, 
Lagrange applied his new method to a variety of problems in physics, dynamics, and geometry.
In particular, he derived the {\em equation of minimal graphs}. 
The term {\em minimal surface} has since been used for a surface which is a stationary point 
of the area functional. The question whether a domain in a minimal surface 
truly minimizes the area among nearby surfaces with the same boundary 
can be analysed by considering the second variation of area. 
It was later shown that a minimal graph in $\R^3$ over a compact convex domain in $\R^2$
is an absolute area minimizer, and hence small enough pieces of any minimal surface
are area minimizers.

In 1776, Jean Baptiste Meusnier \cite{Meusnier1776} discovered 
that domains in a surface in $\R^3$ are minimal in the sense of Lagrange if and only if the surface has  
vanishing mean curvature at every point.  He also described the second known minimal surface, 
the {\em helicoid}; see Example \ref{ex:helicoid}. 
It is obtained by a line in $3$-space rotating at a constant rate as it moves at a 
constant speed along the axis of rotation, which is perpendicular to the rotating line. 
Helicoid is the geometric shape of a device known as {\em Archimedes' screw}  
(or the water screw, screw pump, or Egyptian screw), named after Greek philosopher 
and mathematician Archimedes who described it around 234 BC on the occasion of his visit to Egypt. 
There is evidence that this device had been used in ancient Egypt much earlier.
The helicoid is sometimes called "double spiral staircase" --- each of the two 
half-lines sweeps out a spiral staircase, and these two staircases only meet along the axis
of rotation. Therefore, its physical model is a convenient device for letting people ascend and descend 
a staircase without the two crowds meeting in-between. 
From a different field, DNA molecules assume the shape of a helicoid. 

Topologically and conformally the helicoid is the plane. 
Its name derives from helix --- for every point on the helicoid, 
there is a helix (a spiral curve) contained in the helicoid which passes through that point.
The helicoid plays a major role in the classification
of properly embedded minimal surfaces in $\R^3$;  see the survey paper 
\cite{ColdingMinicozzi2006PNAS} by Tobias H.\ Colding and William P.\ Minicozzi. 

Minimal surfaces appear naturally in the physical world. Laws of physics imply that a soap
film spanned by a given frame (i.e., a closed Jordan curve) is a minimal surface.
The reason is that this shape minimizes the surface tension and puts it in equilibrium position.
Soap films, bubbles, and surface tension were studied by the Belgian physicist 
%(and inventor of the stroboscope) 
Joseph Plateau in the 19th century.
Based on his experiments, Karl Weierstrass formulated in 1873 the {\em Plateau problem},
conjecturing that any closed Jordan curve in $\R^3$ spans a minimal surface (in fact, a minimal disc). 
This was confirmed by Tibor Rad{\'o} \cite{Rado1930,Rado1930MZ} (1930) 
and Jesse Douglas \cite{Douglas1931} (1931). 
For his work on the Plateau problem, Douglas received one of the first two Fields Medals
at the International Congress of Mathematicians in Oslo in 1936. 
Half a century later, it was shown that the disc of smallest area with given boundary curve  
(the Douglas--Morrey solution of the Plateau problem) has no branch points; 
see the monograph by Anthony Tromba \cite{Tromba2012}.
Furthermore, if the curve lies in the boundary of a convex domain in $\R^3$ then the solution is embedded
according to  William H.\ Meeks and Shing Tung Yau \cite{MeeksYau1982T,MeeksYau1982MZ}. 
\begin{comment}
by William H.\ Meeks and Shing Tung Yau \cite{MeeksYau1982T,MeeksYau1982MZ} 
that for any Jordan curve $C$ in the boundary of a convex domain in $\R^3$, 
the disc surface of smallest area with boundary $C$ 
(the Douglas--Morrey solution of the Plateau problem) is embedded and without singularities. 
Much more information on the question of regularity of solutions of the Plateau problem
can be found in the monograph by Anthony Tromba \cite{Tromba2012}.
\end{comment}

Minimal surfaces are also studied in more general Riemannian manifolds of dimension at least three.
Holomorphic curves in complex Euclidean spaces $\C^n$ for $n>1$, or in any 
complex K\"ahler manifold of complex dimension at least two, 
are special but important examples of minimal surfaces.
As pointed out by Colding and Minicozzi \cite{ColdingMinicozzi2006PNAS}, there are several
fields where minimal surfaces are actively used in understanding physical phenomena. 
In particular, they come up in the study of compound polymers, protein folding, etc. 
They also play a prominent role in art, especially in architecture.

The connection between minimal surfaces in Euclidean spaces and complex analysis 
has been known since mid-19th century. The basic fact is that a conformal immersion
$X:M\to \R^n$ from a Riemann surface $M$ parameterizes a minimal surface if and 
only if the map $X$ is harmonic (see Theorem \ref{th:harmonic}); 
equivalently, the complex derivative $\di X/\di z$  in any local
holomorphic coordinate $z$ on $M$ is holomorphic. 
Furthermore, the immersion $X$ is conformal if and only if $\di X/\di z$ 
assumes values in the null quadric  $\A\subset \C^n$, 
given by the equation $z_1^2+z_2^2+\cdots+z_n^2=0$  (see \eqref{eq:nullquadric}), 
and $\di X/\di z\ne 0$ if $X$ is an immersion. This leads to the 
{\em Enneper--Weierstrass representation} of any 
conformally immersed minimal surface $M\to\R^n$ as the real part of the integral of a holomorphic map 
$f:M\to \A_* = \A\setminus \{0\} \subset \C^n$ (see Theorem \ref{th:EW}). 
The period vanishing conditions on $f$ along closed curves in $M$ ensure that the 
integral is well-defined. The formula is most concrete in dimension $n=3$ (see \eqref{eq:EWR3}) 
due to an explicit 2-sheeted parameterization of the null quadric  $\A\subset\C^3$ by $\C^2$. 

This connection between minimal surfaces and holomorphic maps
was used by Bernhard Riemann around 1860 in his
construction of properly embedded minimal surfaces in $\R^3$, now called 
{\em Riemann's minimal examples} \cite{Riemann1868} (see the paper \cite{MeeksPerez2016}
by William H.\ Meeks and Joaqu\'in P\'erez), and in numerous further works by other authors. 
It was popularized again in modern times by Robert Osserman \cite{Osserman1986}. 

Despite the long and illustrious history of the subject, the author in collaboration with 
Antonio Alarc\'on, Francisco J.\ L\'opez and others obtained  in the last decade a string of new results
by exploiting the Enneper--Weierstrass representation. 
The main point in our approach is that the punctured null quadric $\A_*$ 
is a complex homogeneous manifold, hence an {\em Oka manifold}, a notion
introduced in \cite{Forstneric2009CR} and treated in \cite[Chapter 5]{Forstneric2017E}.
This implies that holomorphic maps from any open Riemann surface  
(and, more generally, from any Stein manifold, 
that is, a closed complex submanifold of a complex Euclidean space $\C^N$) 
to  $\A_*$ satisfy the Runge--Mergelyan approximation
theorem and the Weierstrass interpolation theorem in the absence of topological obstructions.
Together with methods of convexity theory, 
% which are used to control periods of maps into $\A_*$ on closed curves in a Riemann surface,
this gave rise to many new constructions of conformal minimal surfaces with interesting properties;
see Theorem \ref{th:approximation}.
By using parametric versions of these results, it was possible to determine 
the rough topological shape (i.e., the weak or strong homotopy type) of the space
of nonflat conformal minimal immersions from any given open Riemann surface into $\R^n$
(see Theorem \ref{th:structure}). 
It was also shown that every natural candidate is the Gauss map
of a conformal minimal surface in $\R^n$ (see Theorem \ref{th:Gauss}).

Another complex analytic technique, which has recently had a major impact 
on the field, is an adaptation of the classical Riemann--Hilbert boundary value problem to 
conformal minimal surfaces and holomorphic null curves in Euclidean spaces. This
led to an essentially optimal solution of the {\em Calabi--Yau problem for minimal surfaces}, 
originating in conjectures of Eugenio Calabi from 1965; see Theorems
\ref{th:CY0} and \ref{th:CY}. This technique was also  used 
in the construction of complete proper minimal surfaces in minimally convex 
domains of $\R^n$ (see \cite[Chapter 8]{AlarconForstnericLopez2021}).

The recent results  presented in Section \ref{sec:survey} 
are carefully explained in the monograph \cite{AlarconForstnericLopez2021} published in 
March 2021. The corresponding developments on non-orientable minimal surfaces are 
described in the AMS Memoir \cite{AlarconForstnericLopezMAMS} from 2020.
It is needless to say that both these publication contain many other results not mentioned here.

In 2021, David Kalaj and the author \cite{ForstnericKalaj2021} obtained an optimal Schwarz--Pick lemma for
conformal minimal discs in the ball of $\R^n$ and introduced the notion of hyperbolicity of domains in $\R^n$, 
in analogy with Kobayashi hyperbolicity of complex manifolds. 
This new topic is currently being developed, and it is too early to include it here.

%
%
%   2. AN ELEMENTARY INTRODUCTION
%
%
\section{An elementary introduction to minimal surfaces}\label{sec:elementary}
To make the article accessible to a wide audience including advanced 
undergraduate students of Mathematics, we present in this section a self-contained introduction 
to the theory of minimal surfaces in Euclidean spaces.
We assume familiarity with elementary calculus, topology, and rudiments
of complex analysis; however, no a priori knowledge of differential geometry is expected. 
We shall use the fact that metric-related quantities such as length, area, and curvature of 
curves and surfaces in a Euclidean space $\R^n$ are invariant under translations and 
orthogonal maps of $\R^n$; 
these are the isometries of the Euclidean metric, also called {\em rigid motions}.
For simplicity  of presentation, we focus on minimal surfaces parameterized by plane domains, 
although the same methods apply on an arbitrary open Riemann surface.
More complete treatment is available in a number of texts; see 
\cite{Lawson1980,Osserman1986,BarbosaColares1986,Nitsche1989,ColdingMinicozzi1999,ColdingMinicozzi2011,MeeksPerez2011,MeeksPerez2012Survey,AlarconForstnericLopez2021}, among others. 
For the theory of non-orientable minimal surfaces, see \cite{AlarconForstnericLopezMAMS}.

%
%   CONFORMAL STRUCTURES ON SURFACES
%  
\subsection{Conformal maps and conformal structures on surfaces.} \label{ss:conformal}
From the physical viewpoint, the most natural parameterization of a minimal surface 
is by a {\em conformal map} (from a plane domain, or a conformal surface). 
A conformal parameterization minimizes the total energy of the map and makes 
the tension uniformly spread over the surfaces. We give a brief
introduction to the subject of conformal maps, referring to 
\cite[Sections 1.8--1.9]{AlarconForstnericLopez2021} for more details and further references.

Let $D$ be a domain in $\R^2$ with coordinates $(u,v)$. A $\Ccal^1$ map
$X:D\to\R^n$ $(n\ge 2)$ is an {\em immersion} if the partial derivatives
$X_u=\di X/\di u$ and $X_v=\di X/\di v$ are linearly independent at every point of $D$.
An immersion is said to be {\em conformal} if its differential $dX_p$ at any point 
$p\in D$ preserves angles. It is elementary to see 
(cf.\ \cite[Lemma 1.8.4]{AlarconForstnericLopez2021}) that an immersion $X$ is conformal if and only if
\begin{equation}\label{eq:conformal}
	|X_u|=|X_v| \ \ \text{and}\ \ X_u\,\cdotp X_v=0.
\end{equation}
Here, $\bx\,\cdotp\by$ denotes the Euclidean inner product between vectors
$\bx,\by\in\R^n$ and $|\bx|=\sqrt{\bx\,\cdotp\bx}$ is the Euclidean length of $\bx$. 
A smooth map $X:D\to\R^n$ (of class $\Ccal^1$, not necessarily an immersion) is called conformal if 
\eqref{eq:conformal} holds at each point. It clearly follows that $X$ has rank zero at 
non-immersion points. 

Let $M$ be a topological surface. A {\em conformal structure} on $M$ is given by an atlas 
$\Ucal=\{(U_i,\phi_i)\}_{i\in I}$ with charts $\phi_i:U_i \stackrel{\cong}{\to} V_i\subset\R^2$ 
whose transition maps 
\[
	\phi_{i,j}=\phi_i\circ \phi_j^{-1}:\phi_j(U_i\cap U_j) \to \phi_i(U_i\cap U_j)
\] 
are conformal diffeomorphisms of plane domains. Identifying $\R^2$ with the 
complex plane $\C$, each map $\phi_{i,j}$ is biholomorphic or anti-biholomorphic.
A surface $M$ endowed with a conformal structure
(more precisely, with an equivalence class of conformal structures) 
is a {\em conformal surface}.  
If $M$ is orientable, then by choosing the charts $\phi_i$ in a conformal atlas
to preserve orientation, the transition maps $\phi_{i,j}$ are biholomorphic; hence, 
$\Ucal$ is a complex atlas and $(M,\Ucal)$ is a {\em Riemann surface}.
A connected non-orientable conformal surface $M$ admits a two-sheeted conformal 
covering $\wt M\to M$ by a Riemann surface $\wt M$. 

Assume now that $g$ is a {\em Riemannian metric} on a smooth surface $M$, i.e., 
a smoothly varying family of scalar products $g_p$ on tangent spaces $T_p M$, $p\in M$. 
In any local coordinate $(u,v)$ on $M$, the metric $g$ has an expression 
\[
	g =  E du^2 + 2F du dv + G dv^2,  
\]
where the coefficient functions $E,F,G$ satisfy $EG-F^2>0$. A local chart $(u,v)$ is said to be
{\em isothermal} for $g$ if the above expression simplifies to 
\[
	g = \lambda(u,v)\, (du^2+dv^2) = \lambda |dz|^2,\quad\ z=u+\imath v
\]
for some positive function $\lambda$. 
An important result, first observed by Carl Friedrich Gauss, is that in a neighbourhood 
of any point of $M$ there exist smooth isothermal coordinates. One way to obtain such
coordinates is from solutions of the classical  {\em Beltrami equation}. We refer to 
\cite[Secs.\ 1.8--1.9]{AlarconForstnericLopez2021} for a more precise statement and references.
Since the transition map between any pair of isothermal charts is a conformal diffeomorphism, 
we thus obtain a conformal atlas on $M$ consisting of isothermal charts. 
The upshot is that every Riemannian metric on a smooth surface determines a conformal structure. 
Furthermore, a pair of Riemannian metrics $g,\tilde g$ on $M$
determine the same conformal structure if and only if $\tilde g=\mu g$ for a smooth 
positive function $\mu$ on $M$.

Denote by $\bx=(x_1,\ldots,x_n)$ the Euclidean coordinates on $\R^n$ and by  
\[
	ds^2=dx_1^2+\cdots+dx_n^2
\] 
the Euclidean metric. If $X=(X_1,\ldots,X_n):M\to\R^n$ is a smooth immersion, then
\[
	g=X^*(ds^2) = (dX_1)^2+\cdots + (dX_n)^2
\]
is a Riemannian metric on $M$, called the {\em first fundamental form}. 
By the definition of $g$, the map $X:(M,g)\to (\R^n,ds^2)$ is an isometric immersion. 
By what has been said, $g$ determines a conformal structure on $M$ 
(assuming now that $M$ is a surface), and in this structure the map 
$X$ is a conformal immersion. More precisely, $X(u,v)$ is conformal in any
isothermal local coordinate $(u,v)$ on $M$.

This shows that any immersion $X:M\to\R^n$ from a smooth surface determines 
a unique conformal structure on $M$ which makes $X$ a conformal immersion.
If in addition $M$ is oriented, we get the structure of a Riemann surface. Results of conformality theory 
imply that if $D$ is a domain in $\R^2$ and $X:D\to\R^n$ is an immersion, then there is 
a diffeomorphism $\phi:D'\to D$ from another domain $D'\subset\R^2$ 
such that the immersion $X\circ\phi:D'\to \R^n$ is conformal. In particular,
if $D$ is the disc then we may take $D'=D$. 

The same arguments and conclusions apply to immersions of a smooth surface $M$ into 
an arbitrary Riemannian manifold $(N,\tilde g)$ in place of $(\R^n,ds^2)$.

%
%	FIRST VARIATION OF AREA
%
\subsection{First variation of area and energy}\label{ss:variation}

Assume that $D\subset \R^2_{(u,v)}$ is a bounded domain with piecewise smooth boundary and
$X:\overline D\to \R^n$ is a smooth immersion. Precomposing $X$ with a diffeomorphism 
from another such domain in $\R^2$, we may assume that $X$ is conformal; see  
\eqref{eq:conformal}. We consider the {\em area functional}
\begin{equation}\label{eq:area}
	 \Area(X) = \int_D |X_u\times X_v| \, dudv
	= \int_D \sqrt{|X_u|^2 |X_v|^2 - |X_u\,\cdotp  X_v|^2} \, dudv
\end{equation}
and the {\em Dirichlet energy functional} 
\begin{equation}\label{eq:Dirichlet}
	 \Dcal(X) = \frac12 \int_D |\nabla X|^2 \, dudv 
	= \frac12  \int_D  \left(|X_u|^2+ |X_v|^2\right) \, dudv.
\end{equation}
We have elementary inequalities
\[
	 |\bx|^2|\by|^2 - |\bx\,\cdotp \by|^2 \le |\bx|^2|\by|^2  \le 
	 \frac14 \left(|\bx|^2+|\by|^2\right)^2,\quad \bx,\by\in\R^n,
\]
which are equalities if and only if $\bx,\by$ is a conformal frame, i.e., 
$|\bx|=|\by|$ and $\bx\,\cdotp \by=0$. Applying this to the vectors 
$\bx=X_u$ and $\by=X_v$ gives $\Area(X) \le \Dcal(X)$, with equality if and only if $X$ is conformal.
Hence, these two functionals have the same critical points on the set of conformal immersions.

It is elementary to find critical points of these functional. The calculation is 
simpler for the Dirichlet functional $\Dcal$, but the expression for the first variation 
is the same for both functionals at a conformal map $X$. 
Assuming that $G:\overline D\to \R^n$ is a smooth map vanishing on $bD$, 
the first variation of $\Dcal$ at $X$ in direction $G$ equals
\begin{equation}\label{eq:firstvariationD}
	\frac{d}{dt}\Big|_{t=0} \Dcal(X+tG) = 
	\int_D \left(X_u\,\cdotp G_u + X_v\,\cdotp G_v\right) dudv 
	= - \int_D \Delta X\, \cdotp G \, dudv,
\end{equation}
where $\Delta X = X_{uu} + X_{vv}$
%\[
%	\Delta X = X_{uu} + X_{vv} = \sum_{i=1}^n (X_i)_{uu}+(X_i)_{vv}
%\]
is the Laplace of $X$. (We integrated by parts and used $G|_{bD}=0$.) 
The right-hand-side of \eqref{eq:firstvariationD} 
vanishes for all $G$ if and only if $\Delta X=0$. This proves:

%
%   CHARACTERIZATION OF CONFORMAL MINIMAL SURFACES - HARMONICITY
%
\begin{thm}\label{th:harmonic}
Let $D$ be a relatively compact domain in $\R^2$ with piecewise smooth boundary.
A smooth conformal immersion $X:\overline D\to\R^n$ $(n\ge 3)$ 
is a stationary point of the area functional \eqref{eq:area} if and only if $X$ is harmonic: $\Delta X=0$.
\end{thm}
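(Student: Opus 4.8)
The plan is to follow the route prepared above: since the first variation of the Dirichlet energy \eqref{eq:Dirichlet} is elementary, I would first argue that at a conformal immersion the first variations of $\Area$ and $\Dcal$ coincide, and then combine the variational identity \eqref{eq:firstvariationD} with the fundamental lemma of the calculus of variations.

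\emph{Step 1: transfer from area to energy.} From the elementary inequality recalled just before the statement one has, for \emph{every} smooth map $Y:\overline D\to\R^n$ (not merely for immersions), $\Area(Y)\le\Dcal(Y)$, with equality exactly when $Y$ is conformal. Fix a smooth $G:\overline D\to\R^n$ with $G|_{bD}=0$ and set $\phi(t)=\Dcal(X+tG)-\Area(X+tG)$. Then $\phi\ge 0$ on $\R$, and since $X$ is conformal, $\phi(0)=0$; hence $t=0$ is a global minimum of $\phi$. Now $t\mapsto\Dcal(X+tG)$ is a quadratic polynomial in $t$, so smooth; and because $X$ is an immersion and $\overline D$ is compact, the quantity $|X_u|^2|X_v|^2-|X_u\cdotp X_v|^2$ is bounded away from $0$, so $X+tG$ stays an immersion for $|t|$ small and the integrand of $\Area(X+tG)$ is smooth in $(t,u,v)$ near $t=0$, which permits differentiation under the integral sign. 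Therefore $\phi'(0)=0$, that is,
\[
\frac{d}{dt}\Big|_{t=0}\Area(X+tG)=\frac{d}{dt}\Big|_{t=0}\Dcal(X+tG)=-\int_D\Delta X\cdotp G\,dudv,
\]
the last equality being precisely \eqref{eq:firstvariationD} (integration by parts, using $G|_{bD}=0$).

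\emph{Step 2: conclusion.} By definition $X$ is a stationary point of the area functional exactly when the left-hand side above vanishes for every admissible $G$, i.e.\ when $\int_D\Delta X\cdotp G\,dudv=0$ for all smooth $G$ vanishing on $bD$. Testing with $G=\psi\,\Delta X$, where $\psi\ge 0$ is a smooth bump with support in $D$, gives $\int_D\psi\,|\Delta X|^2\,dudv=0$; since $\psi$ may be taken positive near any prescribed point of $D$, this forces $\Delta X\equiv 0$ on $D$. Conversely, if $\Delta X\equiv 0$ the integral vanishes for every $G$, so $X$ is stationary. This proves Theorem \ref{th:harmonic}.

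The only step that is not a routine computation is the transfer in Step 1 — that the first variations of $\Area$ and $\Dcal$ agree at a conformal map. The idea is to avoid differentiating the square root in \eqref{eq:area} directly and instead use the soft observation that the nonnegative function $\Dcal-\Area$ is pinned to its minimum value $0$ along the segment $t\mapsto X+tG$ passing through the conformal map $X$; the rest is the integration-by-parts identity \eqref{eq:firstvariationD} and the fundamental lemma.
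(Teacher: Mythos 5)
Your proof is correct, but the key step is handled by a genuinely different argument than the paper's. The paper establishes that the first variations of $\Area$ and $\Dcal$ agree at a conformal immersion by a direct computation: it Taylor-expands the integrand of \eqref{eq:area} along $X+tG$, observes that the cross term $|(X_u+tG_u)\cdotp(X_v+tG_v)|^2$ is $O(t^2)$, and differentiates the square root, noting that the factor $2|X_u|^2$ appearing in the numerator cancels against $2\sqrt{|X_u|^4}$ in the denominator. You replace this computation by the soft observation that $\Dcal-\Area\ge 0$ everywhere with equality at the conformal map $X$, so that $t\mapsto(\Dcal-\Area)(X+tG)$ is a nonnegative differentiable function vanishing at $t=0$ and hence has vanishing derivative there; your justification of differentiability of $t\mapsto\Area(X+tG)$ near $t=0$ (the quantity under the square root is bounded below by a positive constant on the compact set $\overline D$ because $X$ is a conformal immersion, so one may differentiate under the integral) is exactly the point that needs care, and you supply it. Your route avoids all bookkeeping with the square root and generalizes to any pair of functionals where one dominates the other and they touch at the map in question; the paper's direct expansion is more explicit and yields the first-variation formula for $\Area$ without presupposing the domination trick. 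Your Step 2 (testing with $G=\psi\,\Delta X$ for a bump function $\psi$) is the standard fundamental lemma and matches what the paper implicitly uses.
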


For completeness, we also calculate the first variation of area at a conformal immersion 
$X$. Let $G:\overline D\to\R^n$ be as above.  Consider the expression
under the integral \eqref{eq:area} for the map $X_t=X+tG$, $t\in \R$. 
Taking into account \eqref{eq:conformal} we obtain
\[
	|X_u+tG_u|^2 \,\cdotp |X_v+tG_v|^2 = 
	|X_u|^4 + 2t \left(X_u\,\cdotp G_u + X_v\,\cdotp G_v\right) |X_u|^2 + O(t^2),
\]
\[
	|(X_u+t G_u)\,\cdotp  (X_v+t G_v)|^2 = O(t^2).
\]
It follows that 
\begin{multline*}
	\frac{d}{dt}\Big|_{t=0} \bigl(|X_u+tG_u|^2 |X_v+tG_v|^2 - |(X_u+tG_u)\,\cdotp (X_v+tG_v)|^2 \bigr)  \cr
	= 2  |X_u|^2 \left(X_u\,\cdotp G_u + X_v\,\cdotp G_v\right) 
\end{multline*}
and therefore
\[
	\frac{d}{dt}\Big|_{t=0} \Area(X+tG) 
	= \int_D \left( X_u\,\cdotp G_u + X_v\,\cdotp G_v\right)  dudv 
	= -\int_D \Delta X \cdotp G \, dudv.
\]
(We integrated by parts and used that $G|_{bD}=0$.
The factor $2|X_u|^2$ also appears in the denominator when differentiating the 
expression for $\Area(X+tG)$ at $t=0$, so this term cancels.)
Comparing with \eqref{eq:firstvariationD}, we see that
\[
	\frac{d}{dt}\Big|_{t=0} \Area(X+tG)  = \frac{d}{dt}\Big|_{t=0} \Dcal(X+tG) = 
	-\int_D \Delta X \cdotp G \, dudv
\]
if $X$ is a conformal immersion.

The same result holds on any compact domain with piecewise smooth boundary in 
a conformal surface $M$. A conformal diffeomorphism changes the Laplacian by 
a multiplicative factor, so there is a well-defined notion of a harmonic function on $M$.

%
%	VANISHING MEAN CURVATURE
%
\subsection{Characterization of minimality by vanishing mean curvature}\label{ss:meancurvature}
In this section, we prove a result due to Meusnier \cite{Meusnier1776} which characterizes
minimal surfaces in terms of vanishing mean curvature; see Theorem \ref{th:CMI}.

To explain the notion of curvature of a smooth plane curve $C\subset \R^2$ at a 
point $p\in C$, we apply a rigid change of coordinates in $\R^2$ taking $p$ to $(0,0)$ and the
tangent line $T_pC$ to the $x$-axis, so locally near $(0,0)$ the curve is
the graph $y=f(x)$ of a smooth function on an interval around $0\in\R$,  
with $f(0)=f'(0)=0$. Therefore,  
\begin{equation}\label{eq:graphC}
	y= f(x) = \frac{1}{2} f''(0) x^2 + o(x^2).
\end{equation}
Let us find the circle which agrees with this graph to the second order at $(0,0)$.
Clearly, such a circle has centre on the $y$-axis, so it is of the form 
$x^2+(y-r)^2=r^2$ for some $r\in\R\setminus \{0\}$, unless $f''(0)=0$ when the $x$-axis  
(a circle of infinite radius) does the job. Solving the equation on $y$ near $(0,0)$ gives
\[
	y=r-\sqrt{r^2-x^2}= r-r\sqrt{1-\frac{x^2}{r^2}} 
	= r-r \left(1 -  \frac{x^2}{2r^2} + o(x^2) \right)  
	= \frac{1}{2r} x^2+ o(x^2).
\] 
A comparison with \eqref{eq:graphC} shows that for $f''(0)\ne 0$ the number
$
	r=1/f''(0) \in \R\setminus \{0\}
$
is the unique number for which the circle agrees with the curve \eqref{eq:graphC}
to the second order at $(0,0)$. This best fitting circle is called the {\em osculating circle}.
The number 
\begin{equation}\label{eq:kappa}
	\kappa=f''(0)=1/r 
\end{equation}
is the {\em signed curvature} of the curve \eqref{eq:graphC} at $(0,0)$, 
its absolute value $|\kappa|=|f''(0)|\ge 0$ is the {\em curvature},
and $|r|=1/|\kappa|=1/|f''(0)|$ is the {\em curvature radius}. If $f''(0)=0$ then
the curvature is zero and the curvature radius is $+\infty$.

Consider now a smooth surface $S\subset \R^3$.
Let $(x,y,z)$ be coordinates on $\R^3$.  Fix a point $p\in S$. A rigid change of coordinates gives
$p=(0,0,0)$ and $T_pS=\{z=0\}=\R^2\times \{0\}$. Then, $S$ is locally near the origin 
a graph of the form 
\begin{equation}\label{eq:graphS}
	z=f(x,y)= \frac{1}{2}\left( f_{xx}(0)x^2 + 2f_{xy}(0,0)xy + f_{yy}(0)y^2\right) + o(x^2+y^2).
\end{equation}
The symmetric matrix 
\begin{equation}\label{eq:Hessian}
	A=\left(\begin{matrix} f_{xx}(0,0) &  f_{xy}(0,0) \\ f_{xy}(0,0)  & f_{yy}(0,0) \end{matrix}\right)
\end{equation}
is called the {\em Hessian matrix} of $f$ at $(0,0)$.
Given a unit vector $v=(v_1,v_2)$ in the $(x,y)$-plane, 
let $\Sigma_v$ be the $2$-plane through $0\in \R^3$ spanned by $v$ and the $z$-axis.
The intersection $C_v := S\cap \Sigma_v$ is then a planar curve contained in $S$, given by
\begin{equation}\label{eq:Cv}
	z= f(v_1t,v_2t) = \frac{1}{2} (Av\,\cdotp v) \, t^2  + o(t^2) 
\end{equation}
for $t\in \R$ near $0$. Since $|v|=1$, the parameters $(t,z)$ on $\Sigma_v$ 
are Euclidean parameters, i.e., the Euclidean metric $ds^2$ on $\R^3$ restricted to the plane $\Sigma_v$
is given by $dt^2+dz^2$. From our discussion of curves and the formula \eqref{eq:kappa},  
we infer that the number 
\[
	\kappa_v = Av\,\cdotp v = f_{xx}(0)v_1^2 + 2f_{xy}(0,0)v_1v_2 + f_{yy}(0)v_2^2  
\]
is the signed curvature of the curve $C_v$ at the point $(0,0)$. 

On the unit circle $|v|^2=v_1^2+v_2^2=1$ the quadratic form $v\mapsto Av\,\cdotp v$
reaches its maximum $\kappa_1$ and minimum $\kappa_2$; these are the {\em principal curvatures} of 
the surface \eqref{eq:graphS} at $(0,0)$. Since $A$ is symmetric, 
$\kappa_1$ and $\kappa_2$ are its eigenvalues. The real numbers
\begin{equation}\label{eq:curvatures}
	H=\kappa_1+\kappa_2 = \mathrm{trace}\, A, \qquad K=\kappa_1 \kappa_2 =\det A
\end{equation}
are, respectively, the {\em mean curvature} and the {\em Gaussian curvature} of $S$ at $(0,0,0)$. 
%(Sometimes the number $\frac12(\kappa_1+\kappa_2)$ is called the mean curvature.)

Note that the trace of $A$ \eqref{eq:Hessian} equals the Laplacian $\Delta f (0,0)$. On the other hand, 
the trace of a matrix is the sum of its eigenvalues. This implies
\begin{equation}\label{eq:LaplaceisH}
	\Delta f (0,0) =\kappa_1+\kappa_2 = H.
\end{equation}

\begin{comment}
Since the matrix $A$ is symmetric, the eigenvectors of $A$ corresponding to
the eigenvalues $\kappa_1$ and $\kappa_2$ are orthogonal. 
By an orthogonal rotation in the $(x,y)$-plane we can map these vectors to
$(1,0)$ and $(0,1)$, so the equation \eqref{eq:graphS} of the surface 
obtains the normal form
\begin{equation}\label{eq:normalS}
	z= f(x,y) = \frac{1}{2} \left( \kappa_1 x^2 + \kappa_2 y^2\right) + o(x^2+y^2).
\end{equation}
\end{comment}

% We shall need the following lemma.

%
%   LAPLACIAN = MEAN CURVATURE
%
\begin{lem}\label{lem:Laplace-MC}
Let $D$ be a domain in $\R^2$. If $X: D\to \R^n$ is a smooth conformal immersion, then
for every $p\in D$ the vector $\Delta X(p)$ is orthogonal to the plane $dX_p(\R^2)\subset \R^n$. Equivalently, the following identities hold on $D$:
\begin{equation}\label{eq:Laplaceorthogonal}
	\Delta X \,\cdotp X_u=0,\qquad 	\Delta X \,\cdotp X_v=0. 
\end{equation}
\end{lem}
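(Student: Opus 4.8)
The plan is to derive the two identities in \eqref{eq:Laplaceorthogonal} directly by differentiating the conformality relations \eqref{eq:conformal}, namely $|X_u|^2=|X_v|^2$ and $X_u\cdotp X_v=0$, which hold identically on $D$. Since $X$ is smooth, both relations may be differentiated with respect to $u$ and $v$, and the resulting equations involve only the first and second partial derivatives of $X$; combining them in pairs should produce exactly $\Delta X\cdotp X_u$ and $\Delta X\cdotp X_v$.

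Concretely, first I would differentiate $|X_u|^2=|X_v|^2$ with respect to $u$, obtaining $X_u\cdotp X_{uu}=X_v\cdotp X_{uv}$, and differentiate $X_u\cdotp X_v=0$ with respect to $v$, obtaining $X_{uv}\cdotp X_v+X_u\cdotp X_{vv}=0$, i.e.\ $X_u\cdotp X_{vv}=-X_v\cdotp X_{uv}$. Adding these two gives
\[
	X_u\cdotp(X_{uu}+X_{vv}) = X_v\cdotp X_{uv} - X_v\cdotp X_{uv} = 0,
\]
which is the first identity $\Delta X\cdotp X_u=0$. By the symmetric computation — differentiating $|X_u|^2=|X_v|^2$ with respect to $v$ to get $X_u\cdotp X_{uv}=X_v\cdotp X_{vv}$, and differentiating $X_u\cdotp X_v=0$ with respect to $u$ to get $X_v\cdotp X_{uu}=-X_u\cdotp X_{uv}$ — adding yields $X_v\cdotp(X_{uu}+X_{vv})=0$, the second identity.

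Finally I would note that $\{X_u(p),X_v(p)\}$ is a basis of the plane $dX_p(\R^2)$ because $X$ is an immersion, so the two orthogonality relations are equivalent to the coordinate-free statement that $\Delta X(p)\perp dX_p(\R^2)$. I do not anticipate a genuine obstacle here: the only point requiring any care is bookkeeping — choosing which relation to differentiate in which variable so that the mixed-partial terms cancel upon addition — and equality of mixed partials $X_{uv}=X_{vu}$, which is available since $X$ is smooth (in fact $\Ccal^2$ suffices).
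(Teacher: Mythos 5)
Your proposal is correct and follows exactly the paper's own argument: differentiate $|X_u|^2=|X_v|^2$ in $u$ and $X_u\cdotp X_v=0$ in $v$ (and symmetrically) so the mixed-partial terms cancel, giving $\Delta X\cdotp X_u=\Delta X\cdotp X_v=0$. No gaps; the concluding remark that $X_u,X_v$ span $dX_p(\R^2)$ is the same (implicit) final step.
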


\begin{proof}
Recall from \eqref{eq:conformal} that $X$ is conformal if and only if $X_u\,\cdotp X_u=X_v\,\cdotp X_v$ 
and $X_u\,\cdotp X_v=0$. 
%\[
%	X_u\,\cdotp X_u=X_v\,\cdotp X_v, \qquad X_u\,\cdotp X_v=0.
%\]
Differentiating the first identity on $u$ and the second one on $v$ yields
\[
	 X_{uu} \,\cdotp  X_{u} =  X_{uv} \,\cdotp  X_{v} = - X_{vv} \,\cdotp  X_{u},
\]
whence $\Delta X \, \cdotp  X_{u} = (X_{uu} + X_{vv})\, \cdotp  X_{u} = 0$. 
Likewise, differentiating the first identity on $v$ and
the second one on $u$ gives $\Delta X \, \cdotp  X_{v} =0$. 
\end{proof}

% Note that the property of $\Delta X$ in Lemma \ref{lem:Laplace-MC} is invariant under rigid motions of $\R^n$.

\begin{comment}
Note that if $A$ is an $n\times n$ matrix and $X(u,v)=(x_1(u,v),\ldots,x_n(u,v)) \in\R^n$ is a 
smooth map $D\to\R^n$ from a plane domain $D\subset \R^2$, then  
\begin{equation}\label{eq:LaplaceR}
	\Delta \left(A \,\cdotp X(u,v)\right) = A\,\cdotp \Delta X(u,v).
\end{equation}
It follows that the property of the Laplacian $\Delta X$ in Lemma \ref{lem:Laplace-MC} 
is invariant under rigid motions of $\R^n$ since they preserve angles.
\end{comment}

We can now prove the following result due to Meusnier \cite{Meusnier1776}.

%
%   CONFORMAL IMMERSIONS ARE MINIMAL IFF HARMONIC
%
\begin{thm}\label{th:CMI}
A smooth conformal immersion $X=(x,y,z): D\to \R^3$ from a domain 
$D\subset\R^2$ parameterizes a surface with vanishing mean curvature function 
if and only if the map $X$ is harmonic, $\Delta X=(\Delta x,\Delta y,\Delta z)=0$.
\end{thm}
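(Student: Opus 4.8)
The plan is to connect the mean curvature of the surface $X(D)\subset\R^3$, as defined via the Hessian normal form in \eqref{eq:curvatures}, with the Laplacian $\Delta X$, using the fact established in Lemma \ref{lem:Laplace-MC} that $\Delta X(p)$ is normal to the tangent plane $dX_p(\R^2)$. The key observation is that both $\Delta X$ and the mean curvature are, up to a common positive conformal factor, the same normal vector, so one vanishes exactly when the other does.

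First I would fix a point $p\in D$ and perform a rigid motion of $\R^3$ so that $X(p)=0$ and the tangent plane $dX_p(\R^2)$ is the horizontal plane $\{z=0\}$; as noted in the commented remark after Lemma \ref{lem:Laplace-MC}, the relevant properties are invariant under such motions since $\Delta(A\cdotp X)=A\cdotp\Delta X$ for a constant matrix $A$. Near $p$ the image surface is a graph $z=f(x,y)$ as in \eqref{eq:graphS}, and its mean curvature at the origin is $H=f_{xx}(0,0)+f_{yy}(0,0)=\Delta f(0,0)$ by \eqref{eq:LaplaceisH}. Next I would relate the intrinsic Laplacian $\Delta X=X_{uu}+X_{vv}$ to this graphical Laplacian. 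Write $X=(X',z\circ X')$ in terms of the horizontal components $X'=(x,y):D\to\R^2$ and the graphing function $f$, so the third component of $X$ is $f(X')$. Since $dX_p$ maps onto the horizontal plane, the map $X'$ is a local diffeomorphism near $p$ and, being conformal as the composition of a conformal map with the orthogonal projection onto the tangent plane (which at $p$ is the identity on that plane), it is a conformal local coordinate change. By Lemma \ref{lem:Laplace-MC}, $\Delta X(p)$ is vertical, i.e. $\Delta X(p)=(0,0,w)$ for some scalar $w$; its first two components vanish. Computing the third component via the chain rule for the Laplacian under a conformal change of coordinates — which only multiplies $\Delta$ by a positive factor, as recalled at the end of Section~\ref{ss:variation} — gives $w=\mu(p)\,\Delta f(0,0)=\mu(p)H$ for some $\mu(p)>0$ (concretely $\mu(p)=|X_u(p)|^{-2}$ in an isothermal coordinate, but the precise value is irrelevant).

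Putting these together: $\Delta X(p)=0$ if and only if its third component $w=\mu(p)H$ vanishes, and since $\mu(p)>0$ this happens if and only if $H=0$ at the point $X(p)$. As $p\in D$ was arbitrary, $\Delta X\equiv 0$ on $D$ if and only if the mean curvature function of the parameterized surface vanishes identically, which is the assertion. Finally, harmonicity of each component is equivalent to $\Delta X=(\Delta x,\Delta y,\Delta z)=0$ by definition, completing the equivalence.

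The main obstacle I expect is the bookkeeping in the step relating $\Delta X$ to $\Delta f$: one must carefully justify that passing to the graphical description via the conformal local coordinate $X'$ changes the Laplacian only by a positive scalar and does not mix in tangential contributions. The clean way to handle this is to exploit Lemma \ref{lem:Laplace-MC} directly — it already tells us the tangential part of $\Delta X(p)$ is zero — so only the single normal (vertical) component needs to be computed, and for that one may as well evaluate at $p$ in an isothermal coordinate where $X_u(p),X_v(p)$ is an orthonormal basis of the horizontal plane, reducing the computation of the vertical component of $X_{uu}+X_{vv}$ at $p$ to exactly $f_{xx}(0,0)+f_{yy}(0,0)$ up to the scaling factor $|X_u(p)|^2$. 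Everything else is formal.
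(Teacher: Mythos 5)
Your proposal is correct and follows essentially the same route as the paper: normalize by a rigid motion so the tangent plane at $p$ is horizontal, use Lemma \ref{lem:Laplace-MC} to kill the tangential components of $\Delta X$, and compute the vertical component by the chain rule applied to the graphing function $f$, obtaining $\Delta X(p)=|X_u(p)|^2\,\Delta f(0,0)\,\bN=|X_u(p)|^2 H\bN$ as in \eqref{eq:LaplaceH}. Two small points to tidy: the horizontal map $X'$ is conformal only \emph{at} $p$ (not on a neighbourhood), so the global ``conformal maps rescale $\Delta$'' principle does not literally apply — but your fallback pointwise computation, which uses $f_x=f_y=0$ at the image of $p$ to discard the terms $f_xx_{uu}+f_yy_{uu}$ etc., is exactly what the paper does and is all that is needed; and the conformal factor is $|X_u(p)|^{2}$, not $|X_u(p)|^{-2}$, though as you note only its positivity matters.
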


\begin{proof}
Fix a point $p_0\in D$; by a translation of coordinates we may assume that  $p_0=(0,0)\in \R^2$.
Since the differential $dX_{(0,0)}:\R^2\to\R^3$ is a conformal linear map, we may assume 
up to a rigid motion on $\R^3$ that $X(0,0)=(0,0,0)$ and 
\[
	dX_{(0,0)}(\xi_1,\xi_2)=\mu(\xi_1,\xi_2,0)\ \ 
	\text{for all $\xi=(\xi_1,\xi_2)\in\R^2$} 
\]	
for some $\mu>0$. Equivalently, at $(u,v)=(0,0)$ the following hold: 
\begin{equation}\label{eq:at0}
	 x_u=y_v=\mu>0,\quad x_v=y_u=0,\quad  z_u=z_v=0.
\end{equation}
Note that 
\begin{equation}\label{eq:mu}
	\mu = |X_u| = |X_v| = \frac{1}{\sqrt2} |\nabla X|.
\end{equation}
The implicit function theorem shows that there is a neighbourhood $U\subset D$ of the origin
such that the surface $S=X(U)$ is a graph $z=f(x,y)$ with $df_{(0,0)}=0$, so 
$f$ is of the form \eqref{eq:graphS}. 
Since the immersion $X$ is conformal, \eqref{eq:Laplaceorthogonal} shows that 
$\Delta X$ is orthogonal to the $(x,y)$-plane $\R^2\times \{0\}$ at the origin, which means that 
\begin{equation}\label{eq:Laplaceat0}
	\Delta x = \Delta y = 0\ \ \text{at $(0,0)$}. 
\end{equation}
We now calculate $\Delta z(0,0)$. Differentiation of $z(u,v)= f(x(u,v),y(u,v))$ gives
\[
	z_u=f_x x_u+f_y y_u,\qquad z_v=f_x x_v+f_y y_v,
\]
\[
	z_{uu} = \left(f_x x_u+f_y y_u\right)_u 
	= f_{xx} x_u^2 + f_{xy}x_uy_u + f_x x_{uu} + f_{yx}x_u y_u + f_{yy}y_u^2 + f_y y_{uu}. 
\]
At the point $(0,0)$, taking into account \eqref{eq:at0} and $f_x=f_y=0$
we get $z_{uu}=\mu^2  f_{xx}$. A similar calculation gives $z_{vv}=\mu^2  f_{yy}$ at $(0,0)$, 
so we conclude that
\begin{equation}\label{eq:Laplacezf}
	\Delta z(0,0) = \mu^2 \Delta f(0,0) = \mu^2 H, 
\end{equation}
where $H$ is the mean curvature of $S$ at the origin (see \eqref{eq:LaplaceisH}). 
Denoting by $\bN=(0,0,1)$ the unit normal vector to $S$ at $0\in\R^3$,
it follows from \eqref{eq:mu}, \eqref{eq:Laplaceat0} and \eqref{eq:Laplacezf} that 
\begin{equation}\label{eq:LaplaceH}
	\Delta X %= |X_u|^2 H \bN 
	=  \frac12 |\nabla X|^2 H \bN
\end{equation}
holds at $(0,0)\in D$. In particular, $\Delta X=0$ if and only if $H=0$. 
This formula is clearly independent of the choice of a Euclidean coordinate system.
\end{proof}

Combining Theorems \ref{th:harmonic} and \ref{th:CMI} gives:

\begin{cor}
Let $D$ be a relatively compact domain in $\R^2$ with piecewise smooth boundary.
A smooth conformal immersion $X:\overline D\to\R^3$ 
is a stationary point of the area functional if and only if the immersed
surface $S=X(D)$ has vanishing mean curvature at every point.
\end{cor}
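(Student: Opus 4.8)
The plan is to chain together the two theorems already established. First I would invoke Theorem \ref{th:harmonic}: for a relatively compact domain $D\subset\R^2$ with piecewise smooth boundary, a smooth conformal immersion $X\colon\overline D\to\R^3$ is a stationary point of the area functional \eqref{eq:area} if and only if $X$ is harmonic, $\Delta X=0$. This handles the variational side of the equivalence and reduces the corollary to relating harmonicity to the geometry of the image surface $S=X(D)$.

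Next I would apply Theorem \ref{th:CMI}, which states that a smooth conformal immersion $X=(x,y,z)\colon D\to\R^3$ parameterizes a surface with vanishing mean curvature function if and only if $\Delta X=0$. Note that Theorem \ref{th:CMI} is stated on the open domain $D$, which is exactly what is needed, since the mean curvature is a pointwise condition on $X(D)$ and the smooth immersion $X$ on $\overline D$ restricts to one on $D$. Combining the two biconditionals through the common middle term $\Delta X=0$ yields: $X$ is a stationary point of the area functional $\iff \Delta X=0 \iff S=X(D)$ has vanishing mean curvature at every point. That is precisely the asserted equivalence.

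There is essentially no obstacle here; the only thing worth a sentence of care is making sure the two theorems are being applied to the same object and the same regularity class. Both require $X$ to be a \emph{smooth conformal immersion}, and the hypothesis of the corollary supplies exactly that on $\overline D$, hence on $D$. The condition "piecewise smooth boundary" and "relatively compact" are needed only for Theorem \ref{th:harmonic} (to make the first variation integration by parts legitimate), and are among the corollary's hypotheses; Theorem \ref{th:CMI} imposes no boundary condition. So the proof is a one-line concatenation, and I would simply write: by Theorems \ref{th:harmonic} and \ref{th:CMI}, each of the two stated conditions is equivalent to $\Delta X=0$, hence to each other. If desired, one can also point to \eqref{eq:LaplaceH}, the identity $\Delta X=\tfrac12|\nabla X|^2 H\bN$, as the concrete link making the second equivalence transparent.
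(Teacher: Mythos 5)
Your proof is correct and is exactly the paper's argument: the corollary is obtained by combining Theorem \ref{th:harmonic} and Theorem \ref{th:CMI} through the common condition $\Delta X=0$. Your added remarks on matching hypotheses and the role of \eqref{eq:LaplaceH} are accurate but not needed.
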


Although we used conformal parameterizations, 
neither curvature nor area depend on the choice of parameterization.
This motivates the following definition. 

%
% DEFINITION OF A MINIMAL SURFACE
%
\begin{defin}\label{def:minimalsurface}
A smooth surface in $\R^3$ is a {\em minimal surface} if and only if its mean curvature vanishes 
at every point. %: $\kappa_1+\kappa_2=0$.
\end{defin}

Every point in a minimal surface is a saddle point, and the surface 
is equally curved in both principal directions but in the opposite normal directions. 
Furthermore, the Gaussian curvature $K= \kappa_1 \kappa_2=-\kappa_1^2\le 0$ 
is nonpositive at every point. The integral
\begin{equation}\label{eq:TC}
	\TC(S)=\int_S K\,\cdotp dA \in [-\infty,0]
\end{equation}
of the Gaussian curvature function with respect to the surface area on $S$ is called 
the {\em total Gaussian curvature}. This number equals zero if and only if $S$ is a 
piece of a plane.

The results presented in this section easily extend to surfaces in $\R^n$ for any $n\ge 3$
which are parameterized by conformal immersions $X:M\to\R^n$ from any open Riemann surface $M$. 
(By the maximum principle for harmonic maps, there are no compact minimal surfaces in $\R^n$.)
There is a sphere $S^{n-3}$ of unit normal vectors to the surface at a given point,
and one must consider the mean curvature of the surface in any given normal direction.
This gives the mean curvature vector field $\bH$ along the surface, which is orthogonal to it
at every point. For surfaces in $\R^3$ we have $\bH=H\bN$, where $H$ is the
mean curvature function \eqref{eq:curvatures} and $\bN$ is a unit normal vector field to the surface.
The formula \eqref{eq:LaplaceH} can then be written in the form  
\[
	\frac{2}{|\nabla X|^2}  \Delta X = \Delta_g X = \bH,
\]
where $\Delta_g X$ denotes the intrinsic Laplacian of the map $X$ with respect to
the induced metric $g=X^*ds^2$ on the surface $M$ 
(cf.\ \cite[Lemma 2.1.2]{AlarconForstnericLopez2021}). 
The formula \eqref{eq:firstvariationD} for the first variation
of area still holds. It shows that the mean curvature vector field $\bH$ is the negative
gradient of the area functional, and the surface is a minimal surface if and only if $\bH=0$.
We refer to \cite{Lawson1980,Osserman1986,AlarconForstnericLopez2021} or any
other standard source for the details.

%
%	ENNEPER-WEIERSTRASS
%
\subsection{The Enneper--Weierstrass representation}\label{ss:EW}

In this section we explain the Enneper-Weierstrass formula, which provides a 
connection between holomorphic maps $D\to\C^n$ with special properties
from domains $D\subset \C$ and conformal minimal immersions $D\to\R^n$ for $n\ge 3$.
The same connection holds more generally for maps from any open Riemann surface.

Let $z=x+\imath y$ be a complex coordinate on $\C$. Let us recall the following basic 
operators of complex analysis, also called {\em Wirtinger derivatives}:
\[
	\frac{\di}{\di z}= \frac12 \left(\frac{\di}{\di x} - \imath \frac{\di}{\di y}\right), \qquad
	\frac{\di}{\di \bar z}= \frac12 \left(\frac{\di}{\di x} + \imath \frac{\di}{\di y}\right).
\]
The differential of a function $F(z)$ can be written in the form
\[
	dF = \frac{\di F}{\di x}dx +  \frac{\di F}{\di y}dy 
	= \frac{\di F}{\di z}dz + \frac{\di F}{\di \bar z}d\bar z, 
\]
where $dz=dx+\imath dy$ and $d\bar z=dx-\imath dy$. 
Note that $\frac{\di F}{\di z}dz$ is the $\C$-linear part and $\frac{\di F}{\di \bar z}d\bar z$ is the 
$\C$-antilinear part of $dF$. In particular, $\di F/\di \bar z=0$ 
holds for holomorphic functions, and $\di F/\di z=0$ holds for antiholomorphic ones. 
In terms of these operators, the Laplacian equals
\[
	\Delta =  \frac{\di^2}{\di x^2}+\frac{\di^2}{\di y^2} 
	= 4 \frac{\di}{\di \bar z} \frac{\di}{\di z} =  4 \frac{\di}{\di z}\frac{\di}{\di \bar z}.
\]
Hence, a function $F:D\to\R$ is harmonic if and only if $\di F/\di z$ is holomorphic.

It follows that a smooth map $X=(X_1,X_2,\ldots,X_n):D\to \R^n$ is a harmonic immersion if and only if
the map $f=(f_1,f_2,\dots, f_n):D\to \C^n$ with components $f_j=\di X_j/\di z$ 
is holomorphic and the component functions $f_j$ have no common zero. 
Furthermore, conformality of $X$ is equivalent to the following nullity condition:
\begin{equation}\label{eq:nullity}
	f_1^2 + f_2^2 +\cdots+ f_n^2 = 0.
\end{equation}
Indeed, we have that 
$
	4f_j^2 =  \left( X_{j,x} -\imath X_{j,y} \right)^2 = (X_{j,x})^2 - (X_{j,y})^2 - 2\imath X_{j,x} X_{j,y},
$
and hence
%Summation over $j=1,\ldots,n$ gives
\[
	4 \sum_{j=1}^n f_j^2  = |X_x|^2-|X_y|^2 - 2\imath X_x\,\cdotp X_y.
\]
Comparing with the conformality conditions \eqref{eq:conformal} proves the claim. 

Since we know by Theorem \ref{th:harmonic} that a conformal immersion is harmonic 
if and only it parameterizes a minimal surface, this gives the following result.

%
%   ENNEPER-WEIERSTRASS THEOREM
%
\begin{thm}[The Enneper-Weierstrass representation]\label{th:EW}
Let $D$ be a connected domain in $\C$.
For every smooth conformal minimal immersion $X=(X_1,X_2,\ldots,X_n):D\to\R^n$,
the map $f=(f_1,f_2,\ldots,f_n)=\di X/\di z : D\to \C^n \setminus \{0\}$ 
is holomorphic and satisfies the nullity conditions \eqref{eq:nullity}. 
Conversely, a holomorphic map $f:D\to \C^n\setminus \{0\}$ 
satisfying \eqref{eq:nullity} and the period vanishing conditions
\begin{equation}\label{eq:Rperiods}
	\Re \oint_C f\, dz=0\ \ \text{for every closed curve $C\subset D$}
\end{equation}
determines a conformal minimal immersion $X:D\to\R^n$ given by
\begin{equation}\label{eq:primitive}
	X(z)= c + 2\Re \int_{z_0}^z f(\zeta)\, d\zeta,\quad z\in D
\end{equation}
for any base point $z_0\in D$ and vector $c\in\R^n$. 
\end{thm}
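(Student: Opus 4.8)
The plan is to read off both implications from the Wirtinger-calculus dictionary already set up in this subsection, so that essentially the only genuinely new ingredient is the role of the period conditions \eqref{eq:Rperiods} in making the primitive single-valued.

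First I would dispose of the forward implication. Let $X=(X_1,\ldots,X_n):D\to\R^n$ be a smooth conformal minimal immersion. By Theorem \ref{th:harmonic} the map $X$ is harmonic, so each component $X_j$ is harmonic, hence each $f_j=\di X_j/\di z$ is holomorphic and $f=\di X/\di z:D\to\C^n$ is a holomorphic map. Conformality of $X$ is, by \eqref{eq:conformal} together with the identity $4\sum_j f_j^2=|X_x|^2-|X_y|^2-2\imath\,X_x\cdotp X_y$, equivalent to the nullity relation \eqref{eq:nullity}, which therefore holds. Moreover $4\sum_j|f_j|^2=|X_x|^2+|X_y|^2$, so a common zero of the $f_j$ would be a point with $X_x=X_y=0$, contradicting that $X$ is an immersion; hence $f$ maps into $\C^n\setminus\{0\}$.

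For the converse, let $f:D\to\C^n\setminus\{0\}$ be holomorphic, satisfy \eqref{eq:nullity}, and have vanishing real periods \eqref{eq:Rperiods}. The first point to verify is that \eqref{eq:primitive} defines a map on $D$ at all: any two paths in $D$ from $z_0$ to $z$ differ by a closed curve $C\subset D$, and passing from one to the other changes $\int_{z_0}^z f\,d\zeta$ by $\oint_C f\,d\zeta$, hence changes $2\Re\int_{z_0}^z f\,d\zeta$ by $2\Re\oint_C f\,d\zeta=0$; so $X$ is a well-defined, real-analytic map $D\to\R^n$. Next, choosing on a neighbourhood of any point a holomorphic primitive $F$ with $F'=f$, we have $X=c+F+\overline F$ there, and since $\di\overline F/\di z=\overline{\di F/\di\bar z}=0$ this gives $\di X/\di z=F'=f$. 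Thus each $\di X_j/\di z=f_j$ is holomorphic, so $X$ is harmonic; \eqref{eq:nullity} then forces $X$ to be conformal, and since $f$ is nowhere zero we have $|X_x|^2+|X_y|^2=4\sum_j|f_j|^2>0$ everywhere, so for a conformal map $X_u$ and $X_v$ are linearly independent and $X$ is an immersion. A harmonic conformal immersion is a conformal minimal immersion by Theorem \ref{th:harmonic} and Definition \ref{def:minimalsurface}, which finishes the converse.

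The one step that is more than bookkeeping is the global well-definedness of $X$ in the converse: on a non-simply-connected $D$ the primitive $\int f\,d\zeta$ genuinely has nonzero periods, and the content of \eqref{eq:Rperiods} is precisely that the real part of this multivalued primitive descends to a single-valued map on $D$. Everything else — holomorphy of $f$, the nullity/conformality correspondence, and the nonvanishing/immersion correspondence — is already available, so I expect the actual write-up to be brief.
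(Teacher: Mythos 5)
Your proof is correct and follows essentially the same route as the paper: the forward direction and the nullity/conformality and nonvanishing/immersion equivalences are exactly the Wirtinger-calculus dictionary the paper assembles before stating the theorem, and the role of \eqref{eq:Rperiods} is precisely the path-independence remark the paper makes afterwards. Your local computation $X=c+F+\overline F$, $\di X/\di z=F'=f$ just makes explicit what the paper leaves implicit in the converse.
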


Conditions \eqref{eq:Rperiods} guarantee that the integral
in \eqref{eq:primitive} is well-defined, that is, independent of the path of integration. 
The imaginary components 
\begin{equation}\label{eq:flux}
	\Im \oint_C f\, dz  = \pgot(C) \in \R^n
\end{equation}
of the periods define the {\em flux homomorphism} $\pgot:H_1(D,\Z)\to\R^n$
on the first homology group of $D$. Indeed, by Green's formula % (the planar version of Stokes's theorem)
the period $\oint_C f\, dz$ only depends on the homology
class $[C]\in H_1(D,\Z)$ of a closed path $C\subset D$. 

%
%   ON HOMOLOGY GROUP
%
\begin{rem}[The first homology group] 
If $D$ is a domain in $\R^2\cong \C$ then its first homology group $H_1(D,\Z)$ 
is a free abelian group $\Z^\ell$ $(\ell\in \{0,1,2,\ldots,\infty\})$ with finitely 
or countably many generators. If $D$ is bounded, connected, and its boundary $bD$ 
consists of $l_1$ Jordan curves $\Gamma_1,\ldots, \Gamma_{l_1}$ and $l_2$ isolated points 
(punctures) $p_1,\ldots, p_{l_2}$, then the group $H_1(D,\Z)$ has
$\ell = l_1+l_2-1$ generators which are represented by loops in $D$ based at any given 
point $p_0\in D$, each surrounding one of the holes of $D$. (By a {\em hole},
we mean a compact connected component of the complement $\C\setminus D$.
A hole which is an isolated point of $\C\setminus D$ is called a {\em puncture}.) 
Indeed, if $\Gamma_1$ is the outer boundary curve of $D$, then every other boundary curve 
$\Gamma_2,\ldots, \Gamma_{l_1}$ of $D$ is contained in the bounded component of
$\C\setminus \Gamma_1$, so it bounds a hole of $D$. Likewise, each of the
points $p_1,\ldots,p_{l_2}$ is a hole (a puncture).  Every hole contributes one generator to $H_1(D,\Z)$. 
The same loops then generate the fundamental group $\pi_1(D,p_0)$ 
as a free nonabelian group, and group $H_1(D,\Z)$ is the abelianisation
of $\pi_1(D,p_0)$. A similar description of the homology group $H_1(D,\Z)$ holds for every surface, 
except that its genus enters the picture as well; 
see \cite[Sect.\ 1.4]{AlarconForstnericLopez2021}. 
For basics on homology and cohomology, see J.\ P.\ May \cite{May1999}. 
\end{rem}

It is clear from Theorem \ref{th:EW} that the following quadric complex hypersurface in $\C^n$ 
plays a special role in the theory of minimal surfaces in $\R^n$: 
\begin{equation}\label{eq:nullquadric}
	\A = \A^{n-1}
	= \bigl\{(z_1,\ldots,z_n) \in\C^n : z_1^2+z_2^2 + \cdots + z_n^2 =0\bigr\}.
\end{equation}
This is called the {\em null quadric } in $\C^n$, and $\A_*=\A\setminus\{0\}$ is 
the {\em punctured null quadric}. Note that $\A$ is a complex cone with
the only singular point at $0$. Theorem \ref{th:EW} says that we get all conformal
minimal surfaces $D\to \R^n$ as integrals of holomorphic maps $f:D\to \A_* \subset \C^n$
satisfying the period vanishing conditions \eqref{eq:Rperiods}.

%
%  DIMENSION 3, GAUSS MAP
%
\smallskip\noindent 
{\bf The Enneper--Weierstrass representation in $\R^3$.} 
In dimension $n=3$, the null quadric $\A$ admits a 2-sheeted quadratic 
parameterization $\phi:\C^2\to \A$ given by 
\begin{equation}\label{eq:2sheeted}
	\phi(z,w) = \bigl(z^2-w^2,\imath(z^2+w^2),2zw\bigr).
\end{equation}
This map is branched at $0\in\C^2$, and $\phi:\C^2\setminus\{0\}\to \A_*$
is a 2-sheeted holomorphic covering map. It follows that
every conformal minimal immersion $X=(X_1,X_2,X_3):D\to\R^3$ 
can be written in the following form (see \cite{Osserman1986} or 
\cite[pp.\ 107--108]{AlarconForstnericLopez2021}): 
\begin{equation}\label{eq:EWR3}
	X(z) = X(z_0) + 2 \Re \int_{z_0}^z
	\left( \frac{1}{2} \Big(\frac{1}{\ggot}-\ggot\Big),
	 \frac{\imath}{2} \Big(\frac{1}{\ggot}+\ggot\Big),1\right) \di X_3.
\end{equation}
Here,  $\di X=\frac{\di X}{\di z}dz=(\di X_1,\di X_2,\di X_3)$, and 
\begin{equation}\label{eq:CGauss0}
	\ggot =  \frac{\di X_3}{\di X_1 -\imath \, \di X_2} : D \lra \CP^1=\C\cup\{\infty\}
\end{equation}
is a holomorphic map to the Riemann sphere (a meromorphic function on $D$), 
called the {\em complex Gauss map} of $X$. Identifying $\CP^1$ with 
the unit $2$-sphere $S^2\subset \R^3$ by the stereographic projection 
from the point $(0,0,1)\in S^2$, $\ggot$ corresponds to the classical Gauss map 
$\bN= X_x \times X_y/|X_x \times X_y|:D\to S^2$ of $X$. 

Many important quantities and properties of a minimal surface are determined by its Gauss map.
In particular, we have that
\begin{eqnarray*}
	g  &=& X^* ds^2 = 
	2\left(|\di X_1|^2+|\di X_2|^2+|\di X_3|^2\right) = \frac{(1+|\ggot|^2)^2}{4|\ggot|^2} |\di X_3|^2 
	\\
	K g &=&  - \frac{4|d\ggot|^2}{(1+|\ggot|^2)^2}  = - \ggot^*(\sigma^2_{\CP^1}).	
\end{eqnarray*}
Here, $K$ is the Gauss curvature function \eqref{eq:curvatures} of the metric $X^*ds^2$
and $\sigma^2_{\CP^1}$ is the spherical metric on $\CP^1$.
It follows that the total Gaussian curvature (see \eqref{eq:TC}) 
of a conformal minimal surface $X:D\to\R^3$ equals the negative spherical area of the image 
of the Gauss map $\ggot:D\to\CP^1$ counted with multiplicities,
where the area of the sphere $\CP^1=S^2$ is $4\pi$:
\begin{equation}\label{eq:TCarea}
	\TC(X) = - \Area \, \ggot(D).
\end{equation}
It is a recent result that every holomorphic map $D\to \CP^1$ is the complex Gauss map
of a conformal minimal immersion $X:D\to\R^3$; see Theorem \ref{th:Gauss}. 
Hence, the total Gaussian curvature of a minimal surface can be any number in $[-\infty,0]$.

%
%  EXAMPLE: CATENOID
%
\begin{exa}[Catenoid]\label{ex:catenoid}
A conformal parameterization of a standard catenoid 
(see \cite[Fig.\ 2.1, p.\ 117]{AlarconForstnericLopez2021})
is given by  the map $X=(X_1,X_2,X_3):\R^2 \to \R^3$,
\begin{equation}\label{eq:catenoid}
	X(u,v)=\left(\cos u \,\cdotp \cosh v, \sin u \,\cdotp \cosh v, v\right).
\end{equation}
It is $2\pi$-periodic in the $u$ variable, hence infinitely-sheeted. 
Introducing the variable $z=\E^{-v+\imath u} \in \C^*$,
we pass to the quotient $\C/(2\pi\,\Z) \cong\C^*$ and obtain a single-sheeted parameterization
$X:\C^* \to \R^3$ having the Enneper--Weierstrass representation
\begin{equation}\label{eq:WRcatenoid}
	X(z) = (1,0,0) - 2\Re \int_1^z \left(\frac12 \Big(\frac1{\zeta}-\zeta\Big),
	\frac{\imath}{2} \Big(\frac1{\zeta}+\zeta\Big),1 \right)\frac{d\zeta}{\zeta}.
\end{equation}
%(See \cite[Example 2.8.1]{AlarconForstnericLopez2021} for the details.) 
Its Gauss map is $\ggot(z)=z$ extends to the identity map $\CP^1\to \CP^1$.
Hence, by \eqref{eq:TCarea} the catenoid has total Gaussian curvature equal to $-4\pi$.

The catenoid is one of the most paradigmatic examples  in the theory of minimal surfaces.
A compendium of major results about it can be found in 
\cite[Example 2.8.1]{AlarconForstnericLopez2021}.
\end{exa}

%
%  EXAMPLE: HELICOID
%
\begin{exa}[Helicoid]\label{ex:helicoid}
A conformal parameterization $X:\R^2\to\R^3$ of the standard left helicoid, 
shown on \cite[Fig.\ 2.2, p.\ 119]{AlarconForstnericLopez2021}, is
\begin{equation}\label{eq:helicoid}
	 X(u,v) = (\sin u \,\cdotp\sinh v, -\cos u \,\cdotp\sinh v, u).
\end{equation}
Its Weierstrass representation in the complex coordinate $z=u+\imath v \in\C$ is
\[
	X(z)= \Re \int_0^z
	\left(\frac12 \left(\frac{1}{\E^{\imath \zeta}} - \E^{\imath \zeta}\right),
	 \frac{\imath}{2} \left(\frac{1}{\E^{\imath \zeta}} + \E^{\imath \zeta}\right), 1 \right) d\zeta.
	% \quad\ z=u+\imath v \in\C.
\]
Its complex Gauss map $\ggot(z)=\E^{\imath z}$ is transcendental, so %by \eqref{eq:TCarea} 
the helicoid has infinite total Gaussian curvature $-\infty$. 
Changing the sign of the second component in \eqref{eq:helicoid} gives a right helicoid.
Like the catenoid, the helicoid is a paradigmatic example satisfying various uniqueness theorems.
E.\ Catalan \cite{Catalan1842} proved in 1842 that the helicoid and the plane are the
only ruled minimal surfaces in $\R^3$, i.e., unions of straight lines. Much more recently, W.\ H.\ Meeks
and H.\ Rosenberg proved in 2005 \cite{MeeksRosenberg2005AM} that the helicoid and the plane
are the only properly embedded, simply connected minimal surfaces in $\R^3$. 
Their proof uses curvature estimates of T.\ H.\ Colding and W.\ P.\ Minicozzi 
\cite{ColdingMinicozzi2004-IV}. 
\end{exa}

%
%	MS WITH SINGULARITIES
%
\begin{rem} [Branch points]
Our definition of a conformal map $X:D\to \R^n$ of class $\Ccal^1(D)$ 
requires that equations \eqref{eq:conformal} hold. We have already observed 
that such a map has rank zero at non-immersion points. Assuming that $X$ is 
harmonic at immersion points, it follows that $f=\di X/\di z :D\to \C^n$ is a continuous map
with values in the null quadric $\A$ \eqref{eq:nullquadric} which is holomorphic at immersion points 
of $X$ and vanishes at non-immersion points. By a theorem of T.\ Rad\'o \cite{Rado1924}
(cf.\ \cite[Theorem 15.1.7]{Rudin2008}), such $f$ is holomorphic everywhere on $D$, 
and in particular its zero set consists of isolated points (assuming that $X$ and hence $f$
are nonconstant). This shows that the minimal surface parameterized by $X$ 
has only isolated singularities. See \cite{Tromba2012} for more details.

There are interesting examples of minimal surfaces with branch points.
For example, {\em Henneberg's surface} (see \cite[Example 2.8.9]{AlarconForstnericLopez2021})
is a complete non-orientable minimal surface with two branch points (a branched minimal M\"obius strip),
named after Ernst Lebrecht Henneberg \cite{Henneberg1876} who first
described it in his doctoral dissertation in 1875. 
It was the only known non-orientable minimal surface until 1981
when W.\ H.\ Meeks \cite{Meeks1981} discovered a properly immersed 
minimal M\"obius strip in $\R^3$. A properly embedded minimal M\"obius strip in $\R^4$
was found in 2017 \cite[Example 6.1]{AlarconForstnericLopezMAMS}.
\end{rem}

%
%  SUBSECTION_ HOLOMORPHIC NULL CURVES
%
\subsection{Holomorphic null curves}\label{ss:null}
There is a family of holomorphic curves in $\C^n$ which are close relatives
of conformal minimal surfaces in $\R^n$.
%
%   HOLOMORPHIC NULL CURVES
%
A holomorphic map $Z=(Z_1,\ldots,Z_n):D\to \C^n$ for $n\ge 3$ from a domain $D\subset \C$
satisfying the nullity condition
\[
	(Z'_1)^2 + (Z'_2)^2 + \cdots + (Z'_n)^2 =0
\]
is a {\em holomorphic null curve}\index{holomorphic null curve} in $\C^n$.
Its complex derivative $f=Z'$ assumes values in the null quadric  $\A$ \eqref{eq:nullquadric},
and we have $\oint_C fdz= \oint_C dZ=0$ for any closed curve $C\subset D$.
Conversely, a holomorphic map $f:D\to\A$ satisfying the period vanishing conditions 
\begin{equation}\label{eq:Cperiods}
	\oint_C fdz =0\quad \text{for every closed curve $C\subset D$}
\end{equation}
integrates to a holomorphic null curve
\begin{equation}\label{eq:nullcurve}
	Z(z)=c + \int_{z_0}^z f(\zeta)d\zeta,\qquad z\in D,
\end{equation}
where $z_0\in D$ is any given base point and $c\in\C^n$. Indeed, 
conditions \eqref{eq:Cperiods} guarantee that the integral in \eqref{eq:nullcurve} 
is independent of the choice of a path of integration.
These period conditions are trivial on a simply connected domain $D$.

%
%   CONJUGATE AND ASSOCIATED MINIMAL SURFACES
%
If $Z=X+\imath Y:D\to \C^n$ is an immersed holomorphic null curve, then its real part
$X=\Re Z:D\to\R^n$ and imaginary part $Y=\Im Z:D\to\R^n$ are conformal minimal surfaces
which are harmonic conjugates of each other. 
Indeed, denoting the complex variable in $\C$ by $z=x+\imath y$,
the Cauchy-Riemann equations imply
\[
	f=Z' = Z_x = X_x+\imath Y_x= X_x-\imath X_y = 2 \frac{\di X}{\di z}.
\]
Since $f=Z':D\to\A^{n-1}_*$ satisfies the nullity condition \eqref{eq:nullity}, 
$X$ is a conformal minimal immersion.
In the same way we find that $f=Z'=Y_y+\imath Y_x= 2\imath Y_z$,
so $Y$ is a conformal minimal immersion. Being harmonic conjugates, 
$X$ and $Y$ are called {\em conjugate minimal surfaces}.
Conformal minimal surfaces in the $1$-parameter family
\[
	X^{\,t}=\Re(\E^{\imath \, t}Z) : D\to \R^n,\quad \ t\in\R
\]
are called {\em associated minimal surfaces} of the holomorphic null curve $Z$. 

Conversely, if $X:D\to\R^n$ is a conformal minimal surface and the holomorphic map 
$f=2 \frac{\di X}{\di z}:D\to \A^{n-1}$ satisfies period vanishing conditions 
\eqref{eq:Cperiods}, then $f$ integrates to a holomorphic null curve $Z:D\to\C^n$ \eqref{eq:nullcurve}
with $\Re Z=X$. In general, the imaginary parts of the periods \eqref{eq:nullcurve} determine 
the flux homomorphism $H_1(M,\Z)\to \R$ of the minimal surface $X$ (see \eqref{eq:flux}); 
hence, $X$ is the real part of a holomorphic null curve 
if and only if it has vanishing flux. The periods \eqref{eq:Cperiods} always vanish 
on a simply connected domain $D$, and hence every conformal minimal immersion 
$D\to\R^n$ is the real part of a holomorphic null curve $D\to\C^n$.

The relationship between conformal minimal surfaces and holomorphic null curves extends
to maps having (isolated) branch points. 

%
%  HELICATENOID
%
\begin{exa}[Helicatenoid] \label{ex:helicatenoid}
Consider the holomorphic immersion $Z:\C\to\C^3$, % given by
\begin{equation}\label{eq:helicatenoid}
	Z(z) = (\cos z,\sin z,-\imath z)\in \C^3, \quad\  z=x+\imath y\in\C.
\end{equation}
We have that 
%$Z'(z)=(-\sin z,\cos z,-\imath)$. Since $ \sin^2 z + \cos^2 z + (-\imath)^2=0$,
\[
	Z'(z)=(-\sin z,\cos z,-\imath),\quad\ \sin^2 z + \cos^2 z + (-\imath)^2=0.
\]
Hence, $Z$ is a holomorphic null curve. Consider the 1-parameter family of 
its associated minimal surfaces in $\R^3$ for $t\in [0,2\pi]$:
\begin{equation}\label{eq:helicatenoidassociated}
    X^{\,t}(z) = \Re\left( \E^{\imath t} Z(z) \right)
     = \cos t \left(
     \begin{matrix} \cos x \,\cdotp \cosh y \cr \sin x \,\cdotp \cosh y \cr y \end{matrix}\right)
      + \sin t \left(
      \begin{matrix} \sin x \,\cdotp\sinh y \cr -\cos x \,\cdotp\sinh y \cr x \end{matrix}\right).
\end{equation}
At $t=0$ and $t=\pi$ we have a catenoid (see Example \ref{ex:catenoid}), 
while at $t=\pm \pi/2$ we have a helicoid  (see Example \ref{ex:helicoid}).
Hence, these are conjugate minimal surfaces in $\R^3$. 
The holomorphic null curve \eqref{eq:helicatenoid} is called {\em helicatenoid}.
\end{exa}

%
%
%   3. A SURVEY OF NEW RESULTS
%
%
\section{A survey of new results}\label{sec:survey}
This section is a survey of recent results in the theory of minimal surfaces in Euclidean 
spaces, which were discussed in my lecture at 8 ECM. A detailed presentation is available in the 
monograph \cite{AlarconForstnericLopez2021} and, for non-orientable surfaces, in the AMS Memoir
\cite{AlarconForstnericLopezMAMS} by Alarc\'on, L\'opez and myself.
% The only exception is the material in the last subsection which was developed in 2021.

%
% 	APPROXIMATION
%
\subsection{Approximation, interpolation, and general position theorems}\label{ss:approximation}
Holomorphic approximation is a central topic in complex analysis. Holomorphic functions 
and maps with interesting properties are often constructed inductively, exhausting 
the manifold by an increasing sequence of compact sets 
such that one can approximate holomorphic functions uniformly on each one
by holomorphic functions on $M$. The quintessential example is Runge's theorem 
from 1885 \cite{Runge1885} on approximation of holomorphic functions
on a compact set $K\subset \C$ with connected complement by holomorphic polynomials.
A major extension is Mergelyan's theorem \cite{Mergelyan1951} from 1951.

In order to generalize Runge's theorem, we need the following concept. 
Denote by $\Ocal(M)$ the algebra of holomorphic functions on a complex manifold $M$. 
Given a  compact set $K$ in $M$, its $\Ocal(M)$-convex hull (or holomorphic hull) is the set
\[
	\wh K=\bigl\{z\in M: |f(z)|\le \sup_K |f| \ \ \text{for all} \  f\in \Ocal(M)\bigr\}.
\]
If $K=\wh K$ then $K$ is said to be {\em holomorphically convex}, or $\Ocal(M)$-convex, or a
{\em Runge compact}. If $M$ is the complex plane or, more generally, an open Riemann surface, 
then the hull $\wh K$ is the union of $K$ and all relatively compact connected components
of $M\setminus K$ (the holes of $K$ in $M$). There is no topological characterization
of the hull in higher dimensional complex manifolds.

Holomorphically convex sets are the natural sets for holomorphic
approximation. Runge's theorem was extended to open Riemann surfaces by 
H.\ Behnke and K.\ Stein \cite{BehnkeStein1949} in 1949, who proved that any holomorphic function
on a neighborhood of a Runge compact $K$ in open Riemann surface
$M$ can be approximated uniformly on $K$ by holomorphic functions on $M$.
A related result on higher dimensional complex manifolds is the Oka--Weil theorem
which pertains to Runge compacts in $\C^n$ and, more generally,
in any {\em Stein manifold} (a closed complex submanifold of a Euclidean space $\C^n$). 
A recent survey of holomorphic approximation theory can be found in
\cite{FornaessForstnericWold2020}.

We have seen in Subsection \ref{ss:EW} that every conformal minimal immersion
$M\to \R^n$ from an open Riemann surface $M$ is the integral of a holomorphic
map $f:M\to\A_*\subset \C^n$ into the punctured null quadric  $\A_*$; furthermore,
$f$ must satisfy the period vanishing conditions \eqref{eq:Rperiods}.
Hence, a Runge-type approximation theorem for conformal minimal surfaces in $\R^n$
(or holomorphic null curves in $\C^n$) reduces to the approximation problem for holomorphic maps $f:M\to\A_*$ satisfying the period vanishing conditions \eqref{eq:Rperiods} (or \eqref{eq:Cperiods}
when considering null curves).
This is a nonlinear approximation problem. The first part, ignoring the period conditions,
fits within Oka theory. In particular, the manifold $\A_*$ is easily seen to be 
a homogeneous space of the complex orthogonal group $O_n(\C)$.
Runge-type approximation theorems for holomorphic maps from Stein manifolds 
to complex homogeneous manifolds were proved by Hans Grauert \cite{Grauert1957II}
(1957) and Grauert and Kerner \cite{GrauertKerner1963} (1963). 
More generally, a complex manifold $Y$ is said to be an {\em Oka manifold} if and only if
approximation results of this type hold for holomorphic maps $M\to Y$ from any 
Stein manifold in the absence of topological obstructions. Oka theory also includes
interpolation theorems for holomorphic maps, generalizing classical theorems 
of K.\ Weierstrass \cite{Weierstrass1885} and H.\ Cartan \cite{Cartan1953}.
For the theory of Oka manifolds, see \cite{Forstneric2017E}. 

The second part of the problem, ensuring the period vanishing conditions 
\eqref{eq:Rperiods} or \eqref{eq:Cperiods} for holomorphic maps to $\A_*$, can be 
treated by using sprays of holomorphic maps together with elements of convexity theory.
More precisely, Gromov's one-dimensional convex integration lemma 
from \cite{Gromov1973} is useful in this regard. 
The main techniques underlying all subsequent developments
were established in \cite{AlarconForstneric2014IM} (2014). 
Their application led to the following result, which is a summary of several individual
theorems. Parts (i), (ii) and (iv) are due to Alarc\'on, L\'opez, and myself
\cite{AlarconForstneric2014IM,AlarconForstnericLopez2016MZ,AlarconForstnericLopezMAMS}
(the special case of (i) for $n=3$ was obtained beforehand in 
\cite{AlarconLopez2012JDG}), while (iii) was proved by Alarc\'on and Castro-Infantes 
\cite{AlarconCastroInfantes2018GT,AlarconCastro-Infantes2019APDE}.
Related results for conformal minimal surfaces of finite total curvature were given by 
Alarc\'on and L\'opez \cite{AlarconLopez2019}.

%
%   Approximation and interpolation
%
\begin{mainthm} \label{th:approximation} 
Let $K$ be a compact set with piecewise smooth boundary and without holes (a Runge compact)
in an open Riemann surface $M$. Then:
\begin{enumerate}[\rm (i)]
\item Every conformal minimal immersion $X:K \to \R^n\ (n\ge 3)$ 
can be approximated uniformly on $K$ by proper conformal minimal immersions
$\wt X:M\to \R^n$.
\item
The approximating map $\wt X$ can be chosen to have only simple double points if $n=4$,
and to be an embedding if $n\ge 5$. 
\item
In addition, one can prescribe the values of $ \wt X$ on any closed discrete subset of $ M$ 
(Weierstrass-type interpolation). 
\item
The analogous results hold for non-orientable minimal surfaces in $\R^n$
and for holomorphic null curves in $\C^n,\ n\ge 3$.
\end{enumerate}
\end{mainthm}

The proof of Theorem \ref{th:approximation} is fairly complex, and we shall only outline the main idea. 
Fix a nowhere vanishing holomorphic $1$-form $ \theta$ on the open Riemann surface
$M$. (Such a 1-form always exists; see \cite{GunningNarasimhan1967}.) 
% On $\C$ we may take $\theta=dz$ where $z$ is the coordinate.)
By Enneper--Weierstrass (Theorem \ref{th:EW}), it suffices to prove the Runge 
approximation theorem for holomorphic maps $f:M\to \A_*$ satisfying 
the period vanishing conditions \eqref{eq:Rperiods}. 

Consider an inductive step.
Assume that $K\subset L$ are connected Runge compacts with piecewise smooth 
boundaries in $ M$, $X:K\to\R^n$ is a conformal minimal surface, and 
$f=2\di X/\theta : K\to\A_*$. We wish to approximate $X$ by 
a conformal minimal immersion $\wt X:L\to\R^n$. We may assume that $ f(K)$ is not contained in a 
complex ray $\C^*\bz$ of the null quadric  $\A_*$, for otherwise the result is trivial. 
There are two main cases to consider, the noncritical case and the critical case.

%
%   NONCRITICAL CASE
%
\smallskip
{\em The noncritical case:} there is no change of topology from $ K$ to $ L$.
It is well known that there are closed curves $C_1,\ldots,C_\ell$ in $K$ forming a basis 
of $H_1(K,\Z)$ whose union $C=\bigcup_{j=1}^\ell C_j$ is a Runge compact.
Let $\B^n$ denote the unit ball of $ \C^n$. By using flows of holomorphic vector fields 
on $\C^n$ tangent to $\A$, we construct a smooth map
\[
	 F:K\times \B^{n\ell} \to \A_*, \qquad F(\cdotp,0)=f=2\di X/\theta,
\]
which is holomorphic on $\mathring K\times \B^n$, such that the associated period map
\[
	 \B^{n\ell} \ni t \ \longmapsto\ 
	\left(\int_{C_j} F(\cdotp,t)\theta\right)_{j=1}^\ell \in \C^{n\ell}
\]
is biholomorphic onto its image. Such {\em period dominating spray} can be found of the form
\begin{equation}\label{eq:spray}
	F(p,t)=\phi_{g_1(p)t_1}^{1} \circ \phi_{g_2(p)t_2}^{2} \circ \cdots \circ 
	\phi_{g_{n\ell}(p)t_{n\ell}}^{n\ell} (f(p)) \in\A_*,
	\quad p\in K,
\end{equation}
where each $ \phi^j$ is the flow of a holomorphic vector field tangent to $\A$ and $g_j\in\Ocal(M)$.
We first construct smooth functions $g_i$ on $C$ which give a period dominating spray;
this can be done since the convex hull of $\A$ equals $\C^n$. As $C$ is Runge in $M$,
we can approximate the $g_i$'s by holomorphic functions on $M$, thereby 
obtaining a holomorphic period dominating spray $F$ as above.

In the next key step, we use that $\A_*$ is an Oka manifold, so 
we can approximate $F$ by a holomorphic map 
$\wt F: M\times \B^{n\ell}\to\A_*$. (There is no topological obstruction
since $\A_*$ is connected.) If the approximation is close enough, 
the implicit function theorem furnishes a parameter value $\tilde t\in \B^{n\ell}$ close to $0$ 
such that the map $\tilde f=F(\cdotp, \tilde t):M\to \A_*$ has vanishing real periods
on the curves $C_1,\ldots,C_\ell$. Hence, fixing a point $p_0\in K$, 
the map $\wt X:L\to\R^n$ given by
\[
	\wt X(p)=X(p_0) + \Re \int_{p_0}^p \tilde f\theta,\quad\ p\in L
\]
is a conformal minimal immersion which approximates $X:K\to\R^n$ on $K$.

%
%   CRITICAL CASE
%
\smallskip
{\em The critical case.}
Assume now that $E$ is an embedded smooth arc in $L\setminus \mathring K$ attached with its endpoints 
to $K$ such that $K\cup E$ is a deformation retract of $ L$. (Thus, $L$ has the same topology
as $K\cup E$. This situation arises when passing a critical point of index $1$
of a strongly subharmonic Morse  exhaustion function on $M$.) 
Let $a,b\in bK$ denote the endpoints of $ E$. 
We extend $f$ smoothly across $E$ to a map $f:K\cup E\to \A_*$ such that 
\[
	 \Re \int_E f\theta = X(b)-X(a) \in\R^n.
\]
This is possible since the convex hull of $\A_*$ equals $ \C^n$. 
We then proceed as in the noncritical case: embed $ f$ into a period dominating spray 
of smooth maps $K\cup E\to\A_*$ which are holomorphic on $\mathring K=K\setminus bK$, 
approximate it by a holomorphic spray on $L$ by Mergelyan's theorem, 
and pick a parameter value for which the map in the spray 
has vanishing real periods on $K\cup E$, and hence on $L$. The Enneper--Weierstrass 
formula gives a conformal minimal surface $\widetilde X: L \to \R^n$ approximating 
$X$ on $ K$. 

The proof of the basic approximation theorem (i) (without the properness condition) 
is then completed by induction on a suitable exhaustion of $M$ by Runge compacts, 
alternatively using the above two cases. Critical points of index $2$ do not arise.

Interpolation (part (iii)) is easily built into the same inductive construction.
Indeed, in each of the two cases considered above, we can arrange that none 
of the points $p_j\in M$ at which we wish to interpolate lies on the boundary of $K$ or $L$.
By choosing the functions $g_i$ in the spray $F$ \eqref{eq:spray} to vanish 
at those points $p_j$ which lie in the interior of $K$, we ensure that the spray $F$
is fixed at these points (independent of the parameter $t$), and hence the  
approximating map $\wt X$ will agree with $X$ at these points. 
For each of the finitely many points $p_j\in \mathring L\setminus K$ we choose a 
smooth embedded arc $E_j\subset L\setminus \mathring K$ with 
one endpoint $p_j$ and the other endpoint $q_j\in bK$ such that  
$E_j\setminus \{q_j\}\subset L\setminus K$ and these arcs are pairwise disjoint. 
The set $S=K\cup\bigcup_j E_j$ is then a Runge compact. We extend the map $f:K\to\A_*$
smoothly to $S$ such that for each $j$, $\int_{E_j}f\theta$ has the correct value
which ensures that the integral assumes the prescribed value at $p_j$.
It remains to apply the same method as above with a spray which is period dominating 
also on each of the arcs $E_j$ and to use Mergelyan approximation on the set $S$. 

Properness of the approximating conformal minimal immersion $\wt X:M\to\R^n$ 
(part (ii) of the theorem) requires considerable additional work. The main point is
to prove a relative version of the approximation theorem in part (i) in which all
but two components of the given map $X$ extend to harmonic functions
on all of $M$. One can keep these components fixed while approximating the remaining 
two components such that the resulting map $\wt X$ is a conformal minimal immersion.
This requires a more precise version of the Oka principle. This result is then used
in an inductive scheme which is designed so that $|\wt X(z)|$ tends to infinity as
the point $z\in M$ goes to the ideal boundary of $M$ (i.e., it exists any compact subset).

Finally, the general position theorem in part (ii) uses the same technique together
with the transversality theorem. The details of proof are considerably more involved
from the technical viewpoint, and we shall not deal with this subject here.

%
%
%
\begin{comment}
Alarc\'on, L\'opez (2019) \cite{AlarconLopez2019}:
The analogous approximation result holds for complete minimal surfaces of finite total
Gaussian curvature. In this case, $M$ is a finitely punctured compact Riemann surface 
and $ \di \wt X$ is algebraic  with an effective pole at every puncture.
\end{comment}

%
%    TOPOLOGICAL STRUCTURE
%
\subsection{Topological structure of spaces of minimal surfaces}\label{ss:structure}

Assume that $M$ is an open Riemann surface. Fix a nowhere vanishing holomorphic 
$1$-form $\theta$ on $M$. Let $n\ge 3$. An immersion $M\to \R^n$ is said to be {\em nonflat }
if its image is not contained in an affine 2-plane.
We introduce the following notation:
\begin{itemize}
\item $\Ocal(M,\A_*)$ and $\Ccal(M,\A_*)$ denote spaces of holomorphic and continuous maps
$M\to \A_*$, respectively.
\item $\CMI(M,\R^n)$ denotes the space of conformal minimal immersions $M\to\R^n$. 
\item $\CMInf(M,\R^n)$ is the subspace of $\CMI(M,\R^n)$ consisting of nonflat  immersions. 
\item $\NC(M,\C^n)$ is the space of holomorphic null immersions $M\to\C^n$. % (see Subsect.\ \ref{ss:null}).
\item $\NCnf(M,\C^n)$ is the subspace of $\NC(M,\C^n)$ consisting of nonflat  immersions.
\end{itemize}
Consider the commutative diagram
\[
	\xymatrix{ 
	\NCnf(M,\C^n)  \ar[r]^\phi \ar[d]_{\Re}  &  \Ocal(M,\A_*) \ \ar@{^{(}->}[r]^{\tau}   &  \Ccal(M,\A_*) \\ 
	\Re\NCnf(M,\C^n)   \ar@{^{(}->}[r]^{\iota}   &  \CMInf(M,\R^n)  \ar[u]_\psi}
\]
where
\begin{itemize}
\item 
the maps $\phi:\NCnf(M,\C^n)\to \Ocal(M,\A_*)$ and $\psi:\CMInf(M,\C^n)\to \Ocal(M,\A_*)$
are given by $Z\mapsto \di Z/\theta$ and $X\mapsto 2\di X/\theta$, respectively;
\item
the map $\NCnf(M,\C^n) \to \Re\NCnf(M,\C^n)$ is the projection $Z=X+\imath Y\mapsto X$;
\item
the maps $\iota:\Re\NCnf(M,\C^n)\hra \CMInf(M,\R^n)$ and $\tau:\Ocal(M,\A_*) \hra  \Ccal(M,\A_*)$
are the natural inclusions.
\end{itemize}

Recall that a continuous map $\phi:X\to Y$ between topological spaces is said to be a 
{\em weak homotopy equivalence} if it induces a bijection of path components of the two spaces
and, for each integer $k\in\N$, an isomorphism $\pi_k(\phi):\pi_k(X) \stackrel{\cong}{\to} \pi_k(Y)$
of their $k$-th homotopy groups. The map $\phi$ is a {\em homotopy equivalence} 
if there is a continuous map $\psi:Y\to X$ such that $\psi\circ\phi:X\to X$ 
is homotopic to the identity on $X$ and $\phi\circ\psi:Y\to Y$ is homotopic to the identity on $Y$. 
These notions indicate that the spaces $X$ and $Y$ have the same rough topological shape.

Since $\A_*$ is an Oka manifold, the inclusion $\tau:\Ocal(M,\A_*) \hra  \Ccal(M,\A_*)$ is a weak homotopy
equivalence by the Oka--Grauert principle (see \cite[Corollary 5.5.6]{Forstneric2017E}), 
and by L\'arusson \cite{Larusson2015PAMS} it is a 
homotopy equivalence if $M$ is of finite topological type, i.e., if the homology group 
$H_1(M,\Z)$ is a finitely generated abelian group.

The real-part projection map $\Re:\NCnf(M,\C^n) \to \Re\NCnf(M,\C^n)$
is evidently a homotopy equivalence.

It turns out that all other maps in the above diagram are also weak homotopy equivalences.
The first part of the following theorem was proved by L\'arusson and myself 
in \cite{ForstnericLarusson2019CAG}, and 
the second part was proved by Alarc\'on, L\'opez and myself in \cite{AlarconForstnericLopez2019JGEA}.
Validity of statement (a) for $\CMI(M,\R^n)$ and $\NC(M,\C^n)$ remains an open problem.
%not known at this time.

%
%
%
\begin{mainthm} \label{th:structure}
Let $M$ be an open Riemann surface.
\begin{enumerate} [\rm (a)]
\item
Each of the maps $\iota$, $\phi$, $\psi$  in the above diagram is a weak homotopy equivalence,
and a homotopy equivalence if $M$ is of finite topological type.
\item 
The map  $\tau\circ \psi:\CMI(M,\R^n)\to \Ccal(M,\A_*)$ induces a bijection of path components
of the two spaces. Hence, 
\[
	 \pi_0(\CMI(M,\R^n)) = \begin{cases} \Z_2^\ell, & n=3,\ H_1(M,\Z)=\Z^\ell; \\
									   0, & n>3.
						    \end{cases}
\]
\end{enumerate}
\end{mainthm}

It follows that each of the spaces $\NCnf(M,\C^n)$ and $\CMInf(M,\C^n)$ 
is weakly homotopy equivalent to the space $\Ccal(M,\A_*)$ of continuous maps $M\to \A_*$, and 
is homotopy equivalent to $\Ccal(M,\A_*)$ if the surface $M$ has finite topological type.

The group $\Z_2=\{0,1\}$, which appears in part (b), is the 
fundamental group of the punctured null quadric $\A_*\subset\C^3$; see \eqref{eq:2sheeted} 
and note that $\C^2\setminus \{0\}$ is simply connected. 
If $X\in \CMI(M,\R^3)$ then $\di X/\di z:M\to \A_*$ maps every generator of 
the homology group $H_1(M,\Z)$ either to the generator of $\pi_1(\A_*)$ 
or to the trivial element. This gives $2^\ell$ choices, each one determining 
a connected component of $\CMI(M,\R^3)$.
The null quadric $\A_*\subset\C^n$ for $n>3$ is simply connected.

These results are proved by using the parametric versions of techniques discussed in 
Subsection \ref{ss:approximation}. Each of the maps in question satisfies the
parametric h-principle, which implies that it is a weak homotopy equivalence.

%
%
%	THE GAUSS MAP
%
%
\subsection{The Gauss map of a conformal minimal surface}\label{ss:Gauss}
The Gauss map is of major importance in the theory of minimal surfaces. 
We have already seen that the Gauss map of a conformal minimal immersion
$X:M\to\R^3$ is a holomorphic map $\ggot:M\to\CP^1$ \eqref{eq:CGauss0}, 
which coincides with the classical Gauss map $M\to S^2$ under the stereographic projection from 
$S^2$ onto $\CP^1$. In general for any dimension $n\ge 3$  one defines the 
{\em generalized Gauss map} of a conformal minimal immersion $X=(X_1,X_2,\ldots,X_n): M\to\R^n$ 
as the Kodaira-type holomorphic map
\begin{equation}\label{eq:Gaussmaprn}
	\Gcal=[\di X_1:\di X_2:\cdots:\di X_n] : M\to Q^{n-2} \subset \CP^{n-1},
\end{equation}
where 
\[
	Q=Q^{n-2}=\left\{ [z_1:\cdots:z_n]\in \CP^{n-1} :  \sum_{j=1}^n z_j^2=0\right\}
\]
is the projectivization of the punctured null quadric $\A_*$,  a smooth quadric complex
hypersurface in $\CP^{n-1}$.
A recent discovery is the following converse result from \cite{AlarconForstnericLopez2019JGEA}
(see also \cite[Theorem 5.4.1]{AlarconForstnericLopez2021}), which shows that every natural
candidate is the Gauss map of a conformal minimal surfaces.

%
%   MAIN THEOREM
%
\begin{mainthm}\label{th:Gauss}
Assume that $n\ge 3$. 
\begin{enumerate}[\rm (i)]
\item For every holomorphic map $\Gcal:M\to Q^{n-2}$ from an open
Riemann surface there exists a conformal minimal immersion $X:M\to\R^n$ with
the Gauss map $\Gcal$.
\item
If $M$ is a compact bordered Riemann surface and $\Gcal:M\to Q^{n-2}$ is a map of class
$\Acal^{r-1}(M, Q^{n-2})$ for some $r\in\N$, then there is a conformal minimal immersion $X:M\to\R^n$
of class $\Ccal^r(M,\R^n)$ with the Gauss map $\Gcal$.
\end{enumerate}
\end{mainthm}

Here, $\Acal^{r-1}(M, Q^{n-2})$ denotes the space of maps $M\to Q^{n-2}$ of class $\Ccal^{r-1}$
which are holomorphic in the interior $M\setminus bM$ of $M$.

Furthermore, the following assertions hold true in both cases in the above theorem.
\begin{enumerate}[\rm (i)]
\item The conformal minimal immersion $X$ can be chosen to have vanishing flux.
In particular, every holomorphic map $\Gcal:M\to Q^{n-2}$ is the Gauss map of
a holomorphic null curve $M\to\C^n$.
\item If $\Gcal(M)$ is not contained in any projective hyperplane of $\CP^{n-1}$,
then $X$ can be chosen with arbitrary flux, to have prescribed values on a given closed 
discrete subset $\Lambda$ of $M$, to be an immersion with simple double points if $n = 4$, 
and to be an injective immersion if $n\ge 5$ and the prescription of values on $\Lambda$ is injective.
\end{enumerate}

When $n=3$, the quadric $Q^1$ is an embedded rational curve in $\CP^2$ parameterized 
by the biholomorphic map
\begin{equation}\label{eq:parameterizationQ1}
	\CP^1 \ni t \ \stackrel{\tau}{\longmapsto} \
	\left[ \frac{1}{2} \Big(\frac{1}{t}-t \Big) :
	 \frac{\imath}{2} \Big(\frac{1}{t}+t\Big):1\right]
	 = \left[1-t^2: \imath(1+t^2):2t\right]
	 \in  Q^1.
\end{equation}
Writing $(1-t^2, \imath(1+t^2),2t)=(a,b,c)$, we easily find that
\[ 
	t= \frac{c}{a-\imath\, b} = \frac{b-\imath \, a}{\imath\, c} \in \CP^1.
\]
Suppose that $X=(X_1,X_2,X_3):M\to\R^3$ is a conformal minimal immersion, and write
$
	2 \partial X = 2(\di X_1,\di X_2,\di X_3) =(\phi_1,\phi_2,\phi_3).
$
In view of the above formula for $t=t(a,b,c)$ it is natural to consider the holomorphic map
\[ 
	\ggot = \frac{\phi_3}{\phi_1-\imath \, \phi_2}
	=  \frac{\di X_3}{\di X_1-\imath \, \di X_2} : M \lra \CP^1.
\] 
This is the complex Gauss map \eqref{eq:CGauss0} of $X$, which appears in the 
Enneper--Weierstrass representation \eqref{eq:EWR3}. 
The generalized Gauss map $\Gcal:M\to  Q^1\subset\CP^2$ \eqref{eq:Gaussmaprn}
of $X$ is then expressed by $\Gcal=\tau\circ \ggot$,  
where $\tau:\CP^1\to Q^1$ is given by \eqref{eq:parameterizationQ1}.

Let us say a few words about the proof of Theorem \ref{th:Gauss}. 
The first step is to lift the given map $\Gcal:M\to Q$ to a holomorphic map
$G:M\to \A_*$. Note that the natural projection $\A_*\to Q$
sending $(z_1,\ldots,z_n)$ to $[z_1:\cdots:z_n]$ 
is a holomorphic fibre bundle with fibre $\C^*=\C\setminus \{0\}$. The existence of a continuous lifting
follows by noting that the homotopy type of $M$ is a wedge of circles, and every
oriented $\C^*$-bundle over a circle is trivial. Further, since $\C^*$ is an Oka manifold,
every continuous lifting is homotopic to a holomorphic lifting
according to the Oka principle \cite[Corollary 5.5.11]{Forstneric2017E}.

In the second and main step of the proof, the holomorphic map 
$G:M\to \A_*$ is multiplied by a nowhere vanishing holomorphic
function $h:M\to \C^*$ such that the product $f=hG:M\to \A_*$ has vanishing
periods along closed curves in $M$ (see \eqref{eq:Cperiods}), and hence it integrates
to a holomorphic null immersion $Z:M\to\C^n$. Its real part $X=\Re Z:M\to\R^n$
is then a conformal minimal immersion having the Gauss map $\Gcal$.
The construction of such a multiplier $h$ follows the idea of proof 
of Theorem \ref{th:approximation}, but the details are fairly nontrivial
and we refer to the cited works.

There are many results in the literature relating the behaviour of a minimal surface
to properties of its Gauss map. A particularly interesting question is how many hyperplanes
in a general position in $\CP^{n-1}$ can be omitted by the Gauss map of a complete 
conformal minimal surface of finite total curvature. A discussion this topic can be found 
in \cite[Chapter 5]{AlarconForstnericLopez2021} and in several other sources.

%
%
%  	THE CALABI-YAU PROBLEM
%
%
\subsection{The Calabi--Yau problem}\label{ss:CY}
A smooth immersion $X:M\to\R^n$ is said to be complete if $X^*ds^2$ is a complete 
metric on $M$. Equivalent, for every divergent path $\gamma:[0,1)\to M$
(i.e., such that $\gamma(t)$ leaves every compact set in $M$ as $t\to 1$) the image path
$X\circ \gamma:[0,1)\to \R^n$ has infinite Euclidean length.
Clearly, if $X$ is proper then it is complete since any such path $X\circ \gamma(t)$ diverges
to infinity as $t\to 1$. The converse is not true; it is easy to construct complete immersions
(and embeddings if $n\ge 3$) with bounded image $X(M)\subset \R^n$. 

It is however not so easy to find complete bounded immersions with additional properties, such as 
conformal minimal or, in case when the target is a complex Euclidean space $\C^n$,
holomorphic. The following conjecture was posed by Eugenio Calabi in 1965,
\cite[p.\ 170]{Calabi1965Conjecture}. Calabi's conjecture was also promoted by 
S.\ S.\ Chern \cite[p.\ 212]{Chern1966BAMS}.

%
% CALABI'S CONJECTURE
%
\begin{conjecture}\label{conj:Calabi1}
Every complete minimal hypersurface in $\R^n$ $(n\ge 3)$ is unbounded.
Furthermore, every complete nonflat minimal hypersurface in $\R^n$ $(n\ge 3)$ 
has an unbounded projection to every $(n-2)$-dimensional affine subspace.
\end{conjecture}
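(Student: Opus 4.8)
\emph{This conjecture is false.} Unlike the theorems surveyed above it is a genuine conjecture, and it has been refuted, so there is no proof to outline, only a disproof. The second assertion fell first: Luis Jorge and Frederico Xavier (1980) constructed a complete nonflat conformal minimal immersion of the disc $\D$ into a slab $\{\,|x_3|<1\,\}\subset\R^3$, whose projection to the $x_3$-axis is bounded. The first assertion was refuted by Nikolai Nadirashvili (1996), who produced a complete conformal minimal immersion $\D\to\R^3$ with bounded image. The optimal form of such counterexamples --- complete bounded conformal minimal surfaces carrying \emph{any} prescribed conformal structure of a bordered Riemann surface, indeed proper ones into bounded convex or minimally convex domains --- is the ``Calabi--Yau problem for minimal surfaces'' resolved in Theorems \ref{th:CY0} and \ref{th:CY}. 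What follows sketches how one would build such examples.

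The plan, following Nadirashvili, is an infinite recursion. Starting from the flat disc $X_0(z)=(\Re z,\Im z,0)$, one produces conformal minimal immersions $X_k:\D\to\R^3$ in which $X_{k+1}$ differs from $X_k$ by a deformation concentrated in a thin annulus $A_k=\{\,r_k<|z|<r_{k+1}\,\}$, with $r_k\uparrow 1$, arranged so that: (1) $\|X_{k+1}-X_k\|_{\D}$ is as small as we wish; (2) every path in $\D$ crossing $A_k$ outward acquires a prescribed large amount of length in the induced metric; and (3) the image stays inside a fixed slab (Jorge--Xavier) or ball (Nadirashvili). Then $X=\lim_k X_k$ is a complete conformal minimal immersion with bounded image. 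The device reconciling (2) with (1) is a \emph{labyrinth}: inside $A_k$ one places a Runge compact $L_k$ --- a finite union of concentric circular arcs and radial segments --- so that every curve joining the two boundary circles of $A_k$ and avoiding a fixed neighbourhood of $L_k$ is forced to be very long; one then applies the Runge--Mergelyan approximation for maps $\D\to\A_*$ into the punctured null quadric (Theorem \ref{th:approximation}) to deform the Enneper--Weierstrass data of $X_k$ near $L_k$, stretching the induced metric there by a huge factor while moving $X_k$ only negligibly off $L_k$. The period conditions \eqref{eq:Rperiods} are preserved throughout by the period-dominating spray mechanism of Subsection \ref{ss:approximation}, and \eqref{eq:primitive} returns a genuine conformal minimal immersion at each stage.

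The main difficulty --- the reason the conjecture resisted for decades --- is the competition between (1) and (2): forcing interior paths to be long moves the surface, and naive iteration destroys both boundedness and convergence. Jorge and Xavier's point is that the labyrinth confines the large stretching to a set of tiny area, so the sup-norm change on $\D$ can be made arbitrarily small --- but only at the cost of controlling the $(x_1,x_2)$-projection rather than $X$ itself, which is why their surface lies in a slab. Nadirashvili's additional idea is to alternate the labyrinth step with a \emph{L\'opez--Ros deformation} of the Weierstrass data, which rotates the immersion so as to spread the unavoidable growth over all coordinates while keeping the image in a prescribed ball; the technical heart is the simultaneous bookkeeping of the two competing error terms across the recursion. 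The route I would actually take today discards both the labyrinth and the L\'opez--Ros trick and runs the recursion with the Riemann--Hilbert method for conformal minimal surfaces mentioned in Section \ref{sec:intro}, which is what produces the sharp Theorems \ref{th:CY0} and \ref{th:CY}.
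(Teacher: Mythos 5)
Your reading of the statement is the right one: this is a genuine conjecture, not a theorem of the paper, and the paper's own discussion in Subsection \ref{ss:CY} recounts essentially the refutation you describe for immersed surfaces in $\R^3$ --- Jorge--Xavier (1980) against the second assertion, Nadirashvili (1996) against the first --- followed by the sharp Theorems \ref{th:CY0} and \ref{th:CY} obtained by the Riemann--Hilbert method. Your sketch of the labyrinth recursion, the L\'opez--Ros alternation, and the key limitation that Runge-type surgery on the labyrinth destroys control of the conformal type all match the paper's account.

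Two caveats, both of which the paper is careful about and you are not. First, the blanket verdict ``this conjecture is false'' is an overstatement: the paper's conclusion is that Calabi's conjecture is \emph{both right and wrong} in dimension $3$, depending on whether the surface is embedded or merely immersed --- a point left unspecified in the original question. Colding and Minicozzi (2008) proved that every complete embedded minimal surface of finite topological type in $\R^3$ is proper, and Meeks, P\'erez and Ros extended this to finite genus and countably many ends; for such embedded surfaces the conjecture is a theorem. Any disproof must therefore say explicitly that the counterexamples are immersed and cannot be made embedded in $\R^3$. Second, the conjecture as stated concerns minimal \emph{hypersurfaces} in $\R^n$ for all $n\ge 3$, whereas the counterexamples you cite are two-dimensional surfaces in $\R^3$; for hypersurfaces of dimension $n-1\ge 3$ the question is not settled by this circle of ideas. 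The honest summary is ``refuted for immersed surfaces in $\R^3$, confirmed for embedded surfaces of finite genus and countably many ends in $\R^3$, and open for higher-dimensional hypersurfaces'', rather than simply ``false''.
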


A particular reason which may have led Calabi to propose these conjectures was
the theorem of S.\ S.\ Chern and R.\ Osserman \cite{ChernOsserman1967JAM} from that time.
Their result says in particular that if $X:M\to\R^n$ $(n\ge 3)$ is a complete conformal 
minimal surface  of finite total Gaussian curvature $\TC(X)>-\infty$, then $M$ is the complement 
of finitely many points $p_1,\ldots,p_m$ in a compact Riemann surface $R$, the
holomorphic $1$-form $\di X$ has an effective pole at each point $p_j$, and $X$ is proper. 
(The first statement holds even without the completeness assumption on $X$,  
due to a result of Huber \cite{Huber1957} from 1957.) The Chern--Osserman theorem 
says that such $X$ is complete if and only if $\di X$ has an effective pole at each puncture $p_j$.
The asymptotic behaviour of $X$ at the punctures was described by 
M.\ Jorge and W.\ Meeks \cite{JorgeMeeks1983T} in 1983.

It turns out that, at least in dimension $n=3$, Calabi's conjecture is both right and wrong, 
depending on whether the minimal surface is embedded
or merely immersed. (This point was not specified in the original question.)
In dimension $n=3$, the answer is radically different for these two cases, 
as we now explain. 

The first counterexample to Calabi's conjecture in the immersed case was given by 
L.\ P.\ de M.\ Jorge and F.\ Xavier in 1980 \cite{JorgeXavier1980AM},
who constructed a complete nonflat conformal minimal immersion $\D\to\R^3$ 
from the disc with the range contained in a slab between two parallel planes.

In 1982, S.-T.\ Yau pointed out in \cite[Problem 91]{Yau1982} that  
the question whether there are complete bounded minimal surfaces in $\R^3$ 
remained open despite Jorge--Xavier's example. This became known as the 
{\em Calabi-Yau problem for minimal surfaces}. 

The problem was resolved for immersed surfaces by N.\ Nadirashvili \cite{Nadirashvili1996IM} 
who in 1996 constructed a complete conformal minimal immersion $\D\to\R^3$ 
with the image contained in a ball. Many subsequent results followed, 
showing similar results for topologically more general surfaces; see 
\cite[Section 7.1]{AlarconForstnericLopez2021} for a survey and references.
However, the conformal type of the examples could not
be controlled by the methods developed in those papers, except for the disc.
The reason is that the increase of the intrinsic radius of a surface was achieved by applying
Runge's theorem on pieces of a suitable labyrinth in the surface, chosen such that any divergent
path avoiding most pieces has infinite length, while crossing a piece of the labyrinth 
increases the length by a prescribed amount. However, Runge's theorem does not allow
to control the map everywhere, and hence small pieces of the surface had to be cut away
in order to keep the image bounded. This surgery changes the conformal
type of the surface, and only its topological type can be controlled by this method.

After Nadirashvili's paper, Yau revisited the Calabi--Yau conjectures in his 
2000 millenium lecture and proposed several new questions  
(see \cite[p.\ 360]{Yau2000AMS} or \cite[p.\ 241]{Yau2000AJM}). He asked in particular: 
What is the geometry of complete bounded minimal surfaces in $\R^3$? 
Can they be embedded? What can be said about the asymptotic behaviour of these surfaces near their ends?

Concerning Calabi's conjecture for embedded surfaces, Colding and Minicozzi showed in 2008 
\cite{ColdingMinicozzi2008AM} that every complete embedded minimal surface in $\R^3$ 
of finite topological type is proper in $\R^3$. Their result was extended to surfaces of finite genus 
and countably many ends by W.\ H.\ Meeks, J.\ P{\'e}rez, and A.\ Ros in 2018,
\cite{MeeksPerezRos-CY}. Hence, 

\smallskip
\noindent {\em Calabi's conjecture holds true for embedded 
minimal surfaces of finite genus and countably many ends in $\R^3$.}
\smallskip

Against this background, we have the following result for immersed surfaces. 

\begin{mainthm}\label{th:CY0}
Every open Riemann surface of finite genus and at most countably many ends,
none of which are point ends, is the conformal structure of a complete 
bounded immersed minimal surface in $\R^3$.
\end{mainthm}

By the uniformization theorem of Z.-X.\ He and O.\ Schramm 
\cite[Theorem 0.2]{HeSchramm1993} (1993) solving Koebe's conjecture,
every open Riemann surface of finite genus and at most countably many ends 
is conformally equivalent to a domain of the form 
\begin{equation}\label{eq:MR}
	M = R\setminus \bigcup_{i} D_i,
\end{equation}
where $R$ is a compact Riemann surface without boundary and 
$\{D_i\}_i$ is a finite or countable family of pairwise disjoint compact geometric discs 
or points in $R$. (A {\em geometric disc} in $R$ is a compact subset 
whose preimage in the universal holomorphic covering space of $R$, which is one of the surfaces
$\CP^1$, $\C$, or $\D$, is a family of pairwise disjoint round discs or points.) 
Such $M$ is called a {\em circled domain} in $R$.
Hence, Theorem \ref{th:CY0} is a corollary to the following
more precise result, which includes information about the boundary behaviour of surfaces.

%
%   OUR CY THEOREM
%
\begin{mainthm} %[The Calabi--Yau property for immersed conformal minimal surfaces] 
\label{th:CY}
Assume that $M$ is a circled domain of the form \eqref{eq:MR}. 
For any $n\ge 3$ there exists a continuous map $X:\overline M\to\R^n$ such that
$X:M\to \R^n$ is a complete conformal minimal immersion and $X:bM\to \R^n$ is a
topological embedding. If $n\ge 5$ then there is a topological embedding 
$X:\overline M\to\R^n$ such that $X:M\to \R^n$ is a complete embedded minimal surface.
\end{mainthm}

This means that the image $X(M)$ is a complete immersed minimal surface whose  
boundary $X(bM)$ consists of pairwise disjoint Jordan curves.
The control of conformal structures on complete minimal surfaces 
in Theorems \ref{th:CY0} and \ref{th:CY} is one of the main new aspect of these results; 
the other one is that the surfaces in Theorem \ref{th:CY} have Jordan boundaries. 
These answer the aforementioned questions by Yau.

For surfaces $M$ of type \eqref{eq:MR} with finitely many boundary components, 
Theorem \ref{th:CY} was proved in \cite{AlarconDrinovecForstnericLopez2015PLMS}.
This covers all finite bordered Riemann surfaces in view of the 
uniformization theorem \cite[Theorem 8.1]{Stout1965TAMS} due to E.\ L.\ Stout.
In this case, we actually showed that any conformal minimal immersion
$\overline M\to\R^n$ can be approximated uniformly on $\overline M$ by a 
map $X$ as in the theorem. The general case for countably many ends
was obtained in \cite{AlarconForstneric2021RMI}; an approximation theorem 
also holds in that case.

\begin{comment}
Comparison with the results of Colding and Minicozzi 
\cite{ColdingMinicozzi2008AM} and Meeks et al.\ \cite{MeeksPerezRos-CY}
shows that the answer to Calabi's conjecture in dimension $3$ reflects a 
strong dichotomy between immersed and embedded surfaces.
It remains an open problem whether Calabi's conjecture holds true 
for embedded minimal surfaces in $\R^4$. Nothing is known 
about {\em minimal hypersurfaces} in $\R^n$ for $n\ge 4$.
\end{comment}

The situation regarding point ends remains elusive and does not have a clear-cut answer.
On the one hand, a bounded conformal minimal surface cannot be complete at an isolated point end
(a puncture) since a bounded harmonic function extends across a puncture. 
On the other hand, it was shown in \cite[Theorem 5.1]{AlarconForstneric2021RMI}
that an analogue of Theorem \ref{th:CY} holds for connected domains of the form
\[ 
	M = R\setminus \Big(E\cup \bigcup_{i} D_i\Big),
\] 
where $E$ is a compact set in a compact Riemann surface $R$ 
and $D_i\subset R\setminus E$ are pairwise disjoint geometric discs such that the distance 
to $E$ is infinite within $M$. In particular, there are complete bounded conformal minimal surfaces 
in $\R^3$ with point ends which are limits of disc ends. 

Our construction uses an adaptation of the Riemann--Hilbert boundary value problem
to holomorphic null curves and conformal minimal surfaces, together with a 
method of exposing boundary points  of such surfaces. This technique is explained
in detail in \cite[Chapter 6]{AlarconForstnericLopez2021}.
The modifications which we use provide 
a good control of the position of the whole surface in the ambient space, 
thereby keeping it bounded. The main technical  lemma of independent interest
(see \cite[Lemma 7.3.1]{AlarconForstnericLopez2021}) enables one to make the intrinsic radius 
of a conformal bordered minimal surface in $\R^n$ as large as desired by a deformation 
of the surface which is uniformly as small as desired.
One uses this lemma in an inductive process which converges to 
a bounded complete limit surface. This lemma also allows the construction of complete 
minimal surfaces with other interesting geometric properties. In particular, every
bordered Riemann surface admits a complete proper conformal minimal immersion 
into any convex domain in $\R^n$ (embedding if $n\ge 5$) and, more generally, into 
any minimally convex domain (see \cite[Section 8.3]{AlarconForstnericLopez2021}).
A smoothly bounded domain in $\R^3$ is minimally convex if and only if the
boundary has nonnegative mean curvature at each point.

We give a brief description of the modifications which lead to proof of the above results. 
A complete presentation of this technique is given in \cite[Chapter 6]{AlarconForstnericLopez2021}, 
and Theorem \ref{th:CY} is proved in \cite[Chapter 7]{AlarconForstnericLopez2021}. 
Illustrations can be found in my lecture at 
\url{https://8ecm.si/system/admin/abstracts/presentations/000/000/663/original/8ECM2021.pdf?1626190740}. 

Each step consists of two substeps. In the first substep, we choose a large but finite 
number of roughly equidistributed points on the boundary of the surface and change the
surface so that it grows long spikes (tentacles) at these points, 
which however remain uniformly close to the attachment points. 
(Imagine the picture of a corona virus.) % or an octopus.) 
The effect of this modification is that curves in the surface which terminate near one of the 
exposed boundary points get elongated by a prescribed amount. 
(See \cite[Sect.\ 6.7]{AlarconForstnericLopez2021}.) 

In the second substep, we perform a Riemann--Hilbert type modification 
which increases the intrinsic radius along each of the boundary arcs between a pair of exposed points,
without destroying the effect of substep one. 
% This modification is explained (in several geometries) in \cite[Chapter 6]{AlarconForstnericLopez2021}.
To each boundary arc between a pair of exposed points we attach a 3-dimensional
cylinder, consisting of a 1-parameter family of conformal minimal discs centred at points 
of the given arc. The boundaries of these discs form a 2-dimensional cylinder, 
a product of the arc with a circle, and their radii shrink to zero near the exposed endpoints of the 
arc. Is then possible to modify the surface by pushing each arc very near 
the corresponding 2-dimensional cylinder,  with the modification tempering out near the 
exposed endpoints and away from the arcs. So, the modification in substep 2 is big very close to 
the boundary (except near the exposed points), and it is arbitrarily small outside 
a given neighbourhood of the boundary. The new conformal minimal surface is contained in an 
arbitrarily small neighbourhood of the union of the surface from substep 1 and 
the 3-dimensional cylinders that have been attached to the arcs in substep 2.  
The metric effect of the modification in substep 2 is that the length of any path in the surface 
terminating at an interior point of one of the boundary arcs increases almost by the 
radius of the disc that was attached at this point. (For curves terminating near the exposed 
points a desired elongation was already achieved in substep 1.)  
For technical reasons, we actually work with $\di$-derivatives of these conformal minimal
surfaces, including the boundary discs, so the entire picture concerns families 
of holomorphic maps with values in the punctured null quadric $\A_*$. 
In order to control the period conditions, we work with sprays of such configurations, like in the proof of 
Theorem \ref{th:approximation}. Special attention is paid to avoid introducing branch points 
to our surfaces in the process. As said before, this provides the main modification lemma, and
its inductive application leads to the proof of Theorem \ref{th:CY}.

By this method, the Calabi--Yau property has been established in several geometries:
for holomorphic curves in complex manifolds \cite{AlarconForstneric2013MA}, 
holomorphic null curves in $\C^n$ and conformal minimal surfaces in $\R^n$ for $n\ge 3$
\cite{AlarconForstneric2015MA,AlarconDrinovecForstnericLopez2015PLMS,AlarconForstneric2021RMI}, 
holomorphic Legendrian curves in complex contact manifolds
\cite{AlarconForstnericLopez2017CM,AlarconForstneric2019IMRN}, and superminimal
surfaces in self-dual or anti-self-dual Einstein 4-manifolds \cite{Forstneric2021JGA}. 
For a survey and further references,
see \cite[Sect.\ 7.4]{AlarconForstnericLopez2021}. An axiomatic approach to the Calabi--Yau
problem was proposed in \cite{AlarconForstnericLarusson2019X}.

\begin{comment}
By this method, the Calabi--Yau property has been established in several geometries:
for holomorphic curves in complex manifolds, holomorphic null curves in $\C^n$ for $n\ge 3$,
holomorphic Legendrian curves in an arbitrary complex contact manifold, and for superminimal
surfaces in self-dual or anti-self-dual Einstein 4-manifolds. For a survey and references,
see \cite[Section 7.4]{AlarconForstnericLopez2021}. An axiomatic approach to the Calabi--Yau
problem was proposed in \cite{AlarconForstnericLarusson2019X}.
\end{comment}

The analogue of the Calabi--Yau problem for complex submanifolds in $\C^n$,
which is known as {\em Paul Yang's problem} who raised it in 1977 \cite{Yang1977JDG}, 
has also received a lot of recent attention. In particular, J. Globevnik showed \cite{Globevnik2015AM} 
that for any pair of integers $1\le k<n$, the ball of $\C^n$ admits holomorphic foliations by complete 
$k$-dimensional proper complex subvarieties, most of which are without singularities
(submanifolds).  Another construction using a different technique was given by Alarc\'on et al.\  
\cite{AlarconGlobevnikLopez2019Crelle}, and it was also shown that there are
nonsingular holomorphic foliations of the ball having complete leaves (Alarc\'on \cite{AlarconJDG}).
Furthermore, there are nonsingular holomorphic foliations  of the ball whose leaves
are complete properly embedded discs \cite{AlarconForstneric2020MZ}. 
The techniques in these papers do not apply to more general minimal surfaces, 
and they do not provide control of complex structures of examples.

In conclusion, I propose the following conjecture. Although I am fully aware of the lack of technical
tools to solve it in this generality, I believe that it is true.

\begin{conjecture}\label{conj:CY}
The Calabi--Yau property holds for bordered minimal surfaces in any smooth  
Riemannian manifold $(N,g)$ with $\dim N\ge 3$. Explicity, for every
bordered Riemann surface, $M$, and conformal minimal immersion
$X:\overline M\to N$ it is possible to approximate $X$ uniformly on $M$ by complete 
conformal minimal immersions $M\to N$.
\end{conjecture}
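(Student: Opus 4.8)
The plan is to follow the architecture of the proofs of Theorems \ref{th:CY0} and \ref{th:CY}: reduce the conjecture to a single \emph{intrinsic radius enlargement lemma} and then iterate. Concretely, the lemma to aim for is this: given a conformal minimal immersion $Y:\overline M\to N$ and numbers $\epsilon>0$ and $\rho>0$, there is a conformal minimal immersion $Y':\overline M\to N$ with $\sup_{\overline M}\dist_N(Y,Y')<\epsilon$ whose induced metric $Y'^*g$ has distance from a fixed interior point $p_0\in M$ to the boundary $bM$ greater than $\rho$. Iterating this with a sufficiently fast decreasing sequence $\epsilon_k\to 0$ and an increasing sequence $\rho_k\to\infty$ yields a sequence of conformal minimal immersions converging uniformly on $\overline M$; a standard argument shows the limit is a conformal minimal immersion on $M=\overline M\setminus bM$ whose induced metric is complete, and it is uniformly $\epsilon$-close to the given $X$. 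The fundamental new difficulty compared with the Euclidean case is that there is no Enneper--Weierstrass correspondence available: conformal minimal immersions into $(N,g)$ are conformal harmonic maps, solutions of the quasilinear elliptic system $\tau_g(Y)=0$ built from the Christoffel symbols of $N$, so there is no ambient punctured null quadric $\A_*$, no Oka-theoretic flexibility, and no linear flux homomorphism to manipulate.

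To prove the enlargement lemma I would first localise: cover $X(\overline M)$ by finitely many geodesically convex coordinate balls $B_1,\dots,B_m\subset N$ in each of which, after rescaling, $g$ is $\Ccal^2$-close to the flat metric. The building block that replaces the affine null discs of the flat theory is the standard fact that, in such a ball, for every point $q$, every $2$-plane $\Pi\subset T_qN$ and every sufficiently small radius $r>0$ there is a minimal disc $\D\to N$ through $q$ tangent to $\Pi$, $\Ccal^1$-close to the planar disc of radius $r$, and depending smoothly on $(q,\Pi,r)$; this comes from the implicit function theorem applied to the minimal surface equation linearised at a planar disc in the near-Euclidean coordinate ball (equivalently, from Morrey's solution of the Plateau problem together with uniqueness for small data). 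Using these discs as boundary attachments one would then prove a \emph{Riemann--Hilbert type approximation lemma} for conformal minimal immersions into $N$: a conformal minimal immersion of a bordered Riemann surface can be deformed, by an arbitrarily $\Ccal^0$-small change, so that over a chosen boundary arc it runs along a prescribed family of such discs, and this increases the length of every path terminating on that arc by essentially the radius of the attached disc.

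Granting this, the two substeps of the flat construction transcribe. First an \emph{exposing of boundary points} step: at a finite set of roughly equidistributed boundary points one grows thin minimal ``tentacles'', again assembled from the small-disc family, elongating the surface near those points while moving it less than $\epsilon$; a path terminating near an exposed point then acquires a prescribed amount of extra length. Second, a Riemann--Hilbert modification along each boundary arc between consecutive exposed points, which enlarges the intrinsic radius along that arc without undoing the first substep. The one ingredient with no formal counterpart in the flat theory is control of the \emph{closing / period conditions}: in place of integrating an $\A_*$-valued map and selecting a point in the kernel of a linear period map, I would work directly with the elliptic system $\tau_g(Y)=0$ and build finite-dimensional \emph{period-dominating families} of conformal minimal immersions of the fixed $M$ into $N$ — the deformation directions supplied by compactly supported variations through conformal minimal immersions, whose existence is again a local matter governed by the flexibility of small minimal discs — then correct the periods after each modification by the implicit function theorem. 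As in the flat case, one must take care throughout that all maps produced are unbranched immersions.

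I expect the main obstacle to be precisely this last point: manufacturing, in a curved target, enough flexibility to substitute for the Oka property of the punctured null quadric $\A_*$ that drives the Euclidean constructions — that is, producing period-dominating families of genuine conformal minimal immersions into $N$ with prescribed boundary behaviour and no branch points. This is already the crux when $N$ is a general Riemannian $3$-manifold and $M=\D$, which is why the statement is offered only as a conjecture. A secondary, more technical obstacle is uniformity: the admissible radii of the small minimal discs, hence the elongation gained per step, degrade with the curvature of $N$ and with proximity to $bM$, so the inductive estimates that must simultaneously force completeness of the limit and $\epsilon$-closeness to $X$ are more delicate than in the flat setting.
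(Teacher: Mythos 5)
The statement you were given is Conjecture \ref{conj:CY}, which the paper poses as an \emph{open problem}: the author explicitly writes that he is ``fully aware of the lack of technical tools to solve it in this generality.'' There is therefore no proof in the paper to compare yours against, and your text --- as you yourself acknowledge at the end --- is not a proof but a research programme. To your credit, you have transcribed the architecture of the Euclidean argument faithfully (a single intrinsic-radius enlargement lemma applied iteratively with $\sum\epsilon_k<\infty$ and $\rho_k\to\infty$; exposing of boundary points; a Riemann--Hilbert type modification along the boundary arcs between exposed points), and you have put your finger on exactly why the statement is left as a conjecture: without the Enneper--Weierstrass reduction to holomorphic maps into the punctured null quadric $\A_*$ there is no Oka-theoretic flexibility, no spray machinery, and no known deformation lemma for conformal minimal immersions into a general Riemannian target.

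The concrete gaps are your two unproved lemmas, and they are not minor. First, the ``Riemann--Hilbert type approximation lemma'' for $(N,g)$: in the flat case this is proved entirely at the level of the derivative $f=2\di X/\theta$ with values in $\A_*$, using that $\A_*$ is an Oka manifold; conformal harmonic maps into $(N,g)$ admit no such derivative-level formulation, and even the local problem of attaching a family of small minimal discs along a boundary arc while keeping the result a conformal, unbranched minimal immersion of the \emph{same} bordered surface $M$ is open. Second, your ``period-dominating families'': in a curved target there is literally no period to dominate --- the monodromy obstruction exists in the flat theory only because one integrates an $\A_*$-valued map --- so the task is not to ``correct the periods after each modification'' but to replace the entire derivative-level gluing scheme by something intrinsic to the quasilinear system $\tau_g(Y)=0$, and you do not indicate what that replacement would be. Since you identify these obstacles yourself, the fair assessment is that your proposal is a plausible plan whose missing steps coincide precisely with the known obstructions; it does not constitute a proof, and the paper claims none.
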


%%%%%%%%%%
%%%%%%%%%%
%%%%%%%%%%
%%%%%%%%%%   ACKNOWLEDGMENTS
%%%%%%%%%%
%%%%%%%%%%
\begin{acknowledgments}
My research is supported by Program P1-0291 from ARRS, Republic of Slovenia.
I wish to thank Antonio Alarc\'on and Francisco J.\ L\'opez for having introduced me 
to this beautiful subject back in 2011. I also thank the mentioned colleagues, 
as well as Barbara Drinovec Drnov\v sek and Finnur L\'arusson, for the continuing collaboration 
on this subject. 
\end{acknowledgments}

%%%%%%%%%%
%%%%%%%%%%
%%%%%%%%%%
%%%%%%%%%%   THE BIBLIOGRAPHY
%%%%%%%%%%
%%%%%%%%%%

\small

\end{document}